\theoremstyle{plain}
\newtheorem{theorem}[subsection]{{\bf Theorem}}
\newtheorem*{theorem*}{{\bf Theorem}}
\newtheorem{corollary}[subsection]{{\bf Corollary}}
\newtheorem*{corollary*}{{\bf Corollary}}
\newtheorem{proposition}[subsection]{{\bf Proposition}}
\newtheorem{lemma}[subsection]{{\bf Lemma}}
\theoremstyle{definition}
\theoremstyle{remark}
\numberwithin{equation}{subsection}
\DeclareMathOperator\exprank{exprank}
\begin{document}
\baselineskip=15pt
\title{On $p$-central groups}
\author{Urban Jezernik}
\address{Department of Mathematics \\ University of Ljubljana \\ Jadranska 21 \\ 1000 Ljubljana \\ Slovenia}
\email{urban.jezernik@gmail.com}
\date{June 9, 2012}
\subjclass[2010]{20D15}
\keywords{$p$-central group, Schur multiplier}
\maketitle
\begin{abstract}
 We extend the notion of free $p$-central groups for odd primes $p$ to the case $p=2$ by introducing a variant of the lower $p$-central series. This enables us to calculate Schur multipliers of free $p$-central groups. We also prove that for any $p$-central group the exponent of its Schur multiplier divides the exponent of the group, and determine its exponential rank. 
\end{abstract}
\section{Introduction}
\noindent Let $G$ be a finite $p$-group and denote by $\Omega_i(G)$ the subgroup of $G$ generated by the elements of order dividing $p^i$. We call $G$ a {\em $p$-central} group if $p$ is odd and $\Omega_1(G) \leq Z(G)$, or if $p=2$ and $\Omega_2(G) \leq Z(G)$. These groups are the dual counterpart of powerful $p$-groups, which have proved immensely useful in the study of finite $p$-groups and pro-$p$ groups \cite{lubotzkyMannPowerful, dixonSautoyAnalytic}. 

The power structure of $p$-central groups has been explicitly studied by several authors, beginning with Laffey \cite{laffey1, laffey2}, and more recently by Mann \cite{mannPower}. It has been shown that these groups possess certain properties of regular $p$-groups, for example, the subgroup $\Omega_i(G)$ is equal to the subset of all elements of order diving $p^i$. However, unlike powerful $p$-groups, $p$-central groups need not be regular (cf. \cite{bubboloni0}). A more general approach to the study of these groups has been undertaken by Bubboloni and Corsi Tani in \cite{bubboloni}, where free $p$-central groups are constructed for odd primes $p$. Their work is based on the properties of the well-known lower $p$-central series $\lambda_i(F_r)$ of the free group $F_r$ on $r$-generators \cite[VIII]{huppertBlackburn}. More specifically, they proved that the group $^n G_r = F_r/\lambda_{n+1}(F_r)$ is $p$-central of exponent $p^n$, and that any $r$-generated $p$-group of exponent $p^n$ is its homomorphic image.

In this note, we extend the results of \cite{bubboloni} to the even prime by appropriately adapting the lower $p$-central series $\lambda_i$ to a variant $\bar \lambda_i$, defined inductively by $\bar\lambda_1(G) = G$ and $\bar \lambda_{n+1}(G) = \bar\lambda_n(G)^4 [\bar\lambda_n(G), G]$. This is of course motivated by the definition of $p$-central groups. After establishing some basic properties of this series, we put $^n G_r = F_r/\bar \lambda_{n+1}(F_r)$ and prove the following theorem.

\begin{theorem}
The group ${}^n G_r$ is an $r$-generated $2$-central group of exponent $4^n$. Moreover, any $r$-generated $2$-central group of exponent $2^n$ is a homomorphic image of ${}^{\lceil \frac{n}{2} \rceil + 1} G_r$, and more generally: any finite $r$-generated $2$-group is a homomorphic image of ${}^m G_r$ for some $m$.
\end{theorem}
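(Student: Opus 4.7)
The theorem has three assertions; the plan tackles them in three stages, after establishing some technical properties of the series $\bar\lambda_i$.

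First, in any group $G$ I would verify: (i) $\bar\lambda_i(G)$ is characteristic and commutes with surjections, i.e.\ $\phi(\bar\lambda_i(G))=\bar\lambda_i(\phi(G))$ for $\phi$ surjective; (ii) $[\bar\lambda_i(G),\bar\lambda_j(G)] \leq \bar\lambda_{i+j}(G)$; and (iii) $\bar\lambda_i(G)/\bar\lambda_{i+1}(G)$ is central in $G/\bar\lambda_{i+1}(G)$, abelian, of exponent dividing $4$. Property (ii) is proved by induction on $j$ using $[xy,z] = [x,z]^y[y,z]$ and $[x,yz] = [x,z][x,y]^z$, exactly mirroring the proof of the analogous property for $\lambda_i$ at odd primes in \cite{bubboloni}. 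As a consequence, by induction on $n$ one has $x^{4^n} = (x^{4^{n-1}})^4 \in \bar\lambda_n(G)^4 \leq \bar\lambda_{n+1}(G)$ for every $x \in G$.

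For the first assertion, ${}^nG_r$ is $r$-generated as a quotient of $F_r$, and the power-calculation above forces its exponent to divide $4^n$; the nonvanishing of $\bar\lambda_n(F_r)/\bar\lambda_{n+1}(F_r)$, visible for instance on the abelianisation, supplies the matching lower bound. For $2$-centrality I would prove $\Omega_2({}^nG_r)=\bar\lambda_n(F_r)/\bar\lambda_{n+1}(F_r)$: the inclusion $\supseteq$ is immediate from (iii), while the reverse inclusion, i.e.\ that any $x \in F_r$ with $x^4 \in \bar\lambda_{n+1}(F_r)$ already lies in $\bar\lambda_n(F_r)$, is the most delicate point. Once this characterisation is secured, $[\bar\lambda_n(F_r),F_r] \leq \bar\lambda_{n+1}(F_r)$ gives the desired centrality of $\Omega_2({}^nG_r)$.

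For the universal property, let $H$ be an $r$-generated $2$-central group of exponent $2^n$ and put $m=\lceil n/2\rceil+1$. A surjection $F_r \twoheadrightarrow H$ factors through ${}^mG_r=F_r/\bar\lambda_{m+1}(F_r)$ exactly when $\bar\lambda_{m+1}(H)=1$, by (i). The crucial consequence of $2$-centrality is that $H^{2^{n-2}} \leq \Omega_2(H) \leq Z(H)$, since every $(n-2)$-th power has order dividing $4$. I would then induct on $i$ to show that $\bar\lambda_i(H)$ not only drops in exponent at each step (by a factor of $4$) but also enters the upper central series, so that after $m+1$ iterations the two effects conspire to trivialise it. The parenthetical about arbitrary finite $r$-generated $2$-groups then follows trivially, as any such group has finite exponent $2^n$ for some $n$.

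The main obstacle, as indicated, is the characterisation $\Omega_2({}^nG_r)=\bar\lambda_n(F_r)/\bar\lambda_{n+1}(F_r)$, specifically the implication $x^4 \in \bar\lambda_{n+1}(F_r) \Rightarrow x \in \bar\lambda_n(F_r)$: at $p=2$ the Hall--Petresco-type expansion of $(xy)^4$ fails to simplify as cleanly as $(xy)^p$ for odd $p$, which is precisely why the definition of $\bar\lambda_{i+1}$ involves the fourth rather than the second power. A secondary difficulty is squeezing the sharp bound $m=\lceil n/2\rceil+1$ from the universal-property induction, where one needs to track carefully how commutator growth and power growth interact inside a $2$-central $H$.
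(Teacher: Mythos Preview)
Your plan for the first two assertions is essentially the paper's own route. The paper builds up the same preliminary facts about $\bar\lambda_i$ (its behaviour under surjections, the containment $[\bar\lambda_i,\bar\lambda_j]\le\bar\lambda_{i+j}$, and the exponent-$4$ central factors), and handles the ``delicate point'' $x^4\in\bar\lambda_{n+1}(F_r)\Rightarrow x\in\bar\lambda_n(F_r)$ by showing that the map $x\mapsto x^4\bar\lambda_{n+2}(F_r)$ is a homomorphism $\bar\lambda_n(F_r)\to\bar\lambda_{n+1}(F_r)/\bar\lambda_{n+2}(F_r)$ with kernel exactly $\bar\lambda_{n+1}(F_r)$, via an explicit basis of $\bar\lambda_n/\bar\lambda_{n+1}$ built from basic commutators. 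For the universal property the paper uses the single structural input that in a $2$-central group the series $1=\Omega_0\le\Omega_2\le\Omega_4\le\cdots$ is an ascending \emph{central} series (citing Bubboloni--Corsi~Tani), from which $\bar\lambda_i(H)\le\Omega_{n-2i+2}(H)$ drops out by a one-line induction; your plan to simultaneously track exponent and central-series depth is the same idea, though you should be aware that making the induction go through amounts to reproving that $\Omega$-centrality statement.

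There is, however, a genuine gap in your treatment of the third assertion. You write that the claim for arbitrary finite $r$-generated $2$-groups ``follows trivially, as any such group has finite exponent $2^n$ for some $n$'', but the result you have just established applies only to \emph{$2$-central} groups, and a general finite $2$-group need not be $2$-central. What is actually required is a separate argument that $\bar\lambda_{m+1}(H)=1$ for some $m$ when $H$ is an arbitrary finite $2$-group. The paper obtains this from the explicit description $\bar\lambda_m(G)=G^{4^{m-1}}\gamma_2(G)^{4^{m-2}}\cdots\gamma_m(G)$: once $m$ exceeds both the nilpotency class and (half) the exponent bound, every factor vanishes. An alternative elementary argument, which you could supply instead, is that finiteness forces the descending chain $\bar\lambda_i(H)$ to stabilise at some $N=\bar\lambda_m(H)$ satisfying $N=N^4[N,H]$; reducing modulo $[N,H]$ shows $N/[N,H]$ is an abelian $2$-group equal to its own fourth powers, hence trivial, so $N=[N,H]$, and nilpotency of $H$ then forces $N=1$. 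Either way, this step is not a consequence of the $2$-central case and needs its own justification.
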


We obtain several properties of the group $^n G_r$ which are known to hold in the odd case. This is done by studying the $\bar \lambda$-series in more detail, the results of which also enable us to provide an explicit formula for the Schur multiplier of free $p$-central groups via Hopf's formula.

\begin{theorem}
 The Schur Multiplier $M({}^n G_r)$ is elementary abelian (resp. free $\mathbb Z_4$-module for $p=2$) of rank $\sum_{i=2}^{n+1} c(r,i)$, where  $c(w,g)$ is the number of basic commutators of weight $w$ on $g$ generators in a fixed sequence.
\end{theorem}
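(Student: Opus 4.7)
The plan is to apply Hopf's formula. With $F = F_r$ and $R = \lambda_{n+1}(F)$ (odd $p$) or $R = \bar\lambda_{n+1}(F)$ ($p=2$), we have
$$M({}^n G_r) \;\cong\; \frac{R \cap [F,F]}{[F,R]};$$
writing $q = p$ or $q = 4$ accordingly, the defining recursion becomes the uniform $R_{i+1} = R_i^q[R_i,F]$, which I will exploit throughout.

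The first main step is to establish the factorization
$$R \;=\; \gamma_1(F)^{q^n}\,\gamma_2(F)^{q^{n-1}}\,\cdots\,\gamma_{n+1}(F).$$
The inclusion $\supseteq$ is an easy induction on $n$ using only $R \supseteq R_i^q$ and $R \supseteq [R_i, F]$. The reverse inclusion follows by expanding the recursion for $R$ together with Hall--Petresco identities of the form $[a^q,b] \equiv [a,b]^q$ modulo higher commutators; this is where the lemmas on the $\bar\lambda$-series developed earlier in the paper do the work in the $p=2$ case. Passing to the abelian quotient $R/[F,R]$ and using that basic commutators of weight $i$ freely generate $\gamma_i(F)/\gamma_{i+1}(F)$, I obtain that $R/[F,R]$ is generated by the classes of $c^{q^{n+1-i}}$ for $c$ a basic commutator of weight $i$, $1 \leq i \leq n+1$. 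Intersecting with $[F,F]$ removes exactly the weight-$1$ contributions, since $R/(R \cap [F,F])$ embeds into $F/[F,F] \cong \mathbb{Z}^r$ with image the free abelian group $q^n \mathbb{Z}^r$ spanned by $x_1^{q^n}, \ldots, x_r^{q^n}$. The order bound $(c^{q^{n+1-i}})^q = c^{q^{n+2-i}} \in [F,R]$ is proved by another application of the same commutator identity, rewriting $c^{q^{n+2-i}}$ modulo $\gamma_{i+1}^{q^{n+2-i}}$ as a commutator $[c',x]$ with $c' \in \gamma_{i-1}(F)^{q^{n+2-i}} \subseteq R$.

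The remaining, and main, obstacle is linear independence: the free $(\mathbb{Z}/q)$-module on the basic commutators of weights $2, \ldots, n+1$ must inject into $(R \cap [F,F])/[F,R]$. My plan is to handle this via a Magnus-type embedding of $F$ into a completed non-commutative power-series algebra over $\mathbb{F}_p$ (resp.\ $\mathbb{Z}/4$), tracking for each basic commutator $c$ of weight $i$ the degree-$i$ leading term of $c^{q^{n+1-i}}$ and verifying that distinct basic commutators contribute linearly independent leading terms while elements of $[F,R]$ live in strictly higher filtration degree. An alternative is to construct, for each basic commutator of fixed weight, a specific finite $p$-central quotient of $F/[F,R]$ in which the relevant generator is witnessed to be non-trivial. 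The technical heart is the $p=2$ case, where the interaction of $4$th powers with commutators is subtler than the clean $p$-th power identities available for odd primes, and where the coefficients must be tracked modulo $4$ rather than modulo $2$ in order to recover the free $\mathbb{Z}/4$-module structure.
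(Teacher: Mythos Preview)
Your route via Hopf's formula and the factorization $R=\prod_i\gamma_i(F)^{q^{n+1-i}}$ matches the paper's starting point. Where you diverge is in how the numerator and denominator are identified and how independence is established. The paper packages both pieces at once: $R\cap\gamma_2(F)=N_{n+1,2}(F)$ by Proposition~\ref{prop-torsionFree1}(c) and $[F,R]=[N_{n+1,1}(F),F]=N_{n+2,2}(F)$ by Lemma~\ref{lemma-Nseries}(b), so $M({}^nG_r)\cong N_{n+1,2}(F)/N_{n+2,2}(F)$; then the map $\beta$ of Theorem~\ref{thm-lambdaSeries}, already shown in Section~\ref{lambdaSeries} to be an isomorphism when the nilpotent quotients of $F$ are torsion-free, delivers the free $\mathbb Z/q$-module structure, the explicit generators, and the rank simultaneously. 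Your plan instead recovers generation and the order bound by direct manipulation (workable, though your sketch for $c^{q^{n+2-i}}\in[F,R]$ still has to chase the Hall--Petresco error terms into $[F,R]$, which is precisely what Lemma~\ref{lemma-Nseries} does), and then treats independence separately via a Magnus-type embedding.

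For odd $p$ that independence argument is fine and essentially equivalent to the torsion-free argument behind Theorem~\ref{thm-lambdaSeries}(b). For $p=2$, however, there is no off-the-shelf dimension-subgroup theory over $\mathbb Z/4$ aligned with the $\bar\lambda$-series, and you have not specified which filtration on the power-series ring you intend or why $[F,R]$ lands in strictly higher degree; this is the one place I would call a genuine gap rather than a stylistic difference. Since Theorem~\ref{thm-lambdaSeries}(b) is already at your disposal and handles both primes uniformly, invoking it is far cleaner than rebuilding an independence proof from scratch.
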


For general $p$-central groups, we prove a classical result concerning the exponent of their Schur multipliers. The theorem is an affirmation of a special case of a conjecture due to Schur, stating that the exponent of $M(G)$ divides $\exp G$ for every finite group $G$. By homological arguments, it suffices to consider this question for finite $p$-groups. The conjecture has turned out to be false in general (cf. \cite{pmSchurMultPowerEnd}), yet it is still not known whether or not it holds true for odd primes $p$. We prove that it is so for the class of $p$-central groups.

\begin{theorem}
 Let $G$ be a $p$-central group. Then $\exp M(G) \leq \exp G$.
\end{theorem}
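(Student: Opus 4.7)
The plan is to induct on $n$, where $\exp G = p^n$. Set $N := \Omega_1(G)$ if $p$ is odd, and $N := \Omega_2(G)$ if $p = 2$. The $p$-centrality of $G$ gives $N \le Z(G)$, so $N$ is abelian of exponent $p$ (resp.\ dividing $p^2$). The power-regularity of $p$-central groups recalled in the introduction (namely, that $\Omega_i(G)$ is exactly the set of elements of order dividing $p^i$) shows that $\exp(G/N) = \exp(G)/\exp(N)$. A further structural ingredient, which for $p=2$ is controlled by the $\bar\lambda_i$-series developed earlier in this paper, is that $G/N$ is itself $p$-central. The base case is $G = N$: here $G$ is abelian, so $M(G) \cong G \wedge G$, and the elementary divisor decomposition of $G$ gives $\exp M(G) \mid \exp G$ by direct inspection.

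For the inductive step, apply the Ganea--Stallings exact sequence attached to the central extension $1 \to N \to G \to G/N \to 1$:
\[
G^{\mathrm{ab}} \otimes_{\mathbb{Z}} N \;\longrightarrow\; M(G) \;\longrightarrow\; M(G/N) \;\longrightarrow\; N \cap [G,G] \;\longrightarrow\; 0.
\]
The kernel of the middle map is a quotient of $G^{\mathrm{ab}} \otimes N$, hence its exponent divides $\exp N$. Its image is a subgroup of $M(G/N)$, hence its exponent divides $\exp(G/N) = \exp(G)/\exp(N)$ by the inductive hypothesis. Assembling the two via the short exact sequence $0 \to \ker \to M(G) \to \operatorname{im} \to 0$ yields
\[
\exp M(G) \;\leq\; \exp N \cdot \exp(G/N) \;=\; \exp G,
\]
completing the induction.

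The substantive content is concentrated in the structural step: the closure of the $p$-central class under the quotient $G \mapsto G/\Omega_1(G)$ for odd $p$ (resp.\ $G/\Omega_2(G)$ for $p=2$), together with the matching drop in exponent. For odd $p$ this is classical and due to Laffey--Mann; for $p=2$ it is precisely the sort of control that the $\bar \lambda_i$-machinery of this paper is designed to supply, and one should expect to invoke it here rather than reprove it. The remainder — the exactness of Ganea's sequence and the fact that $\exp(A \otimes B) \mid \gcd(\exp A, \exp B)$ for abelian $A, B$ — is entirely routine.
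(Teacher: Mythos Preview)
Your inductive argument via the Ganea sequence is correct and is genuinely different from the paper's proof. The paper does not induct at all: it takes a covering group $H$ of $G$, identifies $M(G)$ with $Z \leq Z(H) \cap H'$, and shows directly that $(H')^{p^e} = 1$ by invoking the commutator--power identity of Fern\'andez-Alcober, Gonz\'alez-S\'anchez and Jaikin-Zapirain, namely $(H')^{p^e} \leq \prod_{i=1}^{e} [H^{p^{e-i}},{}_{i(p-1)+1}H]$, and then killing each factor using that $(H/Z)^{p^{e-i}} \leq \Omega_i(H/Z) \leq Z_i(H/Z)$. Your route trades this single sharp identity for a structural induction: it needs the closure of the $p$-central class under $G \mapsto G/N$ (equivalently, that the $\Omega$-series is central), plus the exponent arithmetic $\exp N \cdot \exp(G/N) = \exp G$, plus the homological bound from Ganea. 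Both are clean; the paper's is one-shot but leans on an external commutator lemma, while yours is modular and would adapt to any class closed under the relevant quotient with controlled exponent drop.

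Two small corrections. First, in the Ganea--Stallings sequence the tensor term is $(G/N)^{\mathrm{ab}} \otimes N$, not $G^{\mathrm{ab}} \otimes N$; this is harmless for you since the surjection $G^{\mathrm{ab}} \twoheadrightarrow (G/N)^{\mathrm{ab}}$ means the image in $M(G)$ is the same, and only the bound $\exp(- \otimes N) \mid \exp N$ is used. Second, the closure statement for $p=2$ is \emph{not} supplied by the $\bar\lambda$-machinery of this paper --- that machinery is about free groups and the construction of ${}^nG_r$. The fact you need, that the series $\Omega_0 \leq \Omega_2 \leq \Omega_4 \leq \cdots$ is central in a $2$-central group (and the odd-$p$ analogue), is a property of general $p$-central groups; the paper itself invokes it with a reference to Bubboloni--Corsi Tani, and you should cite it the same way rather than pointing to Section~\ref{lambdaSeries}.
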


Our final contribution to the study of $p$-central groups is determining their exponential rank, recently introduced by Moravec in \cite{pmSchurMultPowerEnd}. This provides yet another link between $p$-central groups and powerful $p$-groups, as the results are completely the same, and indicates a potentially profounder role of exponential rank. The theorem goes as follows.

\begin{theorem}
 Let $G$ be a $p$-central group.
\begin{enumerate}[\indent \em (a)]
 \item If $p$ is odd, then $\exprank(G) = 0$.
 \item If $p=2$ and $G$ is not abelian, then $\exprank(G) = 1$.
\end{enumerate}
\end{theorem}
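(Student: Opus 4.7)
The plan is to follow Moravec's computation of the exponential rank for powerful $p$-groups from \cite{pmSchurMultPowerEnd} and transfer it to the $p$-central setting. Since $\exprank(G)$ is a $p$-power-valued gauge comparing $\exp(G)$ with the exponent of a related tensorial/homological invariant of $G$ (the nonabelian tensor or exterior square), the theorem reduces to a sharp control of that exponent.

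The upper bound exploits the exact sequences
\[
 1 \to \nabla(G) \to G \otimes G \to G \wedge G \to 1 \qquad\text{and}\qquad 1 \to M(G) \to G \wedge G \to [G,G] \to 1.
\]
Theorem 3 (proved in the previous section) supplies $\exp M(G) \mid \exp G$, while $\exp[G,G]$ and $\exp \nabla(G)$ are each bounded by $\exp G$, so a naive bound reads $\exp(G \otimes G) \mid \exp(G)^k$ for some small $k$; the task is to reduce $k$ to the correct value. For odd $p$, the hypothesis $\Omega_1(G) \leq Z(G)$ transports, via the Laffey--Mann regularity calculus for $p$-central groups, to a genuine power law $(g \otimes h)^{p^n} = g^{p^n} \otimes h$ inside the tensor square, forcing $\exp(G \otimes G) = \exp G$ and hence $\exprank(G) = 0$. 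For $p = 2$, the weaker hypothesis $\Omega_2(G) \leq Z(G)$ yields the analogous identity only after one additional squaring, giving $\exp(G \otimes G) \mid 2 \exp G$ and $\exprank(G) \leq 1$.

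For the lower bound in (b), one must exhibit a genuine element of order $2 \exp G$ in the appropriate tensor square whenever $G$ is nonabelian. I would project $G$ onto a minimal nonabelian $2$-central section $H$ (e.g.\ a quaternion or generalised quaternion quotient), for which the tensor square is small enough to be analysed directly, and then lift a nonvanishing element of order $2 \exp H$ by naturality along $G \otimes G \twoheadrightarrow H \otimes H$. The main obstacle is precisely this lower bound: the upper bounds are a formal consequence of the $p$-central power-commutator calculus developed earlier in the paper, but confirming that the extra factor of $2$ cannot be absorbed requires either an explicit computation in a universal test case or a delicate analysis of the squaring map on $G \otimes G$ that distinguishes the $\Omega_2$-centrality of the $p=2$ setting from the sharper $\Omega_1$-centrality of the odd case.
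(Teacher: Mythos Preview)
Your proposal rests on a misreading of the definition. In this paper (following \cite{pmSchurMultPowerEnd}), the exponential rank is \emph{not} a comparison of $\exp G$ with $\exp(G\otimes G)$ or $\exp(G\wedge G)$; it is defined via the exponent semigroup
\[
\mathcal{E}(G)=\{n\in\mathbb{Z}\mid (xy)^n=x^ny^n\text{ for all }x,y\in G\},
\]
and if $\exp(G/Z(G))=p^e$ then $\mathcal{E}(G)=p^{e+r}\mathbb{Z}\cup(p^{e+r}\mathbb{Z}+1)$ for a unique $r=\exprank(G)$. So the question is purely whether the $p^e$-th power map (or the $p^{e+1}$-th) is an endomorphism of $G$, and the exact sequences involving $G\otimes G$, $G\wedge G$, and $M(G)$ play no role. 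In particular, establishing $\exp(G\otimes G)=\exp G$ would not by itself yield $\exprank(G)=0$, and an element of order $2\exp G$ in $G\otimes G$ would not witness $\exprank(G)\geq 1$.

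The paper's argument proceeds entirely through the Hall--Petrescu collection calculus. Laffey's theorem gives $\exp G'=\exp(G/Z(G))=p^e$, hence $\gamma_2(G)\leq\Omega_e(G)$; the centrality of the $\Omega$-series in a $p$-central group then forces the higher $\gamma_j(\langle x,y\rangle)$ deep enough into the $\Omega$-series that the correction terms $c_j^{\binom{p^i}{j}}$ in Hall's formula vanish for $i=e$ (odd $p$) or $i=e+1$ ($p=2$). This gives the upper bounds. For the lower bound when $p=2$, one does not pass to a quotient; instead one assumes $\exprank(G)=0$ and expands $(xy)^{2^e}$ explicitly via commutator collection to obtain
\[
(xy)^{2^e}=x^{2^e}y^{2^e}[y,x]^{\binom{2^e}{2}}\cdot(\text{terms in }\gamma_3),
\]
which forces $[y,x]^{2^{e-1}}\in\Omega_{e-1}(G)$ for all $x,y$, contradicting $\exp\gamma_2(G)=2^e$. (As an aside, your proposed test quotients --- quaternion or generalised quaternion groups --- are not $2$-central in the sense used here, since $\Omega_2(Q_{2^n})=Q_{2^n}\not\leq Z(Q_{2^n})$.)
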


\section{The $\bar \lambda$-series}
\label{lambdaSeries}

\noindent Let the {\em $\bar \lambda$-series} of a group $G$ be defined recursively by 
\[
\bar\lambda_1(G) = G, \qquad \bar \lambda_{n+1}(G) = \bar\lambda_n(G)^4 [\bar\lambda_n(G), G].
\]
This is a descending series of characteristic subgroups of $G$. Following \cite{huppertBlackburn}, we also denote 
\[
N_{n,k}(G) = \gamma_k(G)^{4^{n-k}} \gamma_{k-1}(G)^{4^{n-k-1}} \cdots \gamma_n(G)
\]
for $1 \leq k \leq n$. Our first aim is to show that the groups $\bar \lambda_i$ and $N_{i,j}$ are tightly correlated. We begin with a lemma.


\begin{lemma} \label{lemma-Nseries}
 Let $G$ be a group and $m,n,k$ positive integers. 
\begin{enumerate}[\indent \em (a)]
  \item For any $1 \leq i \leq n$, we have $[\gamma_i(G)^{4^{n-i}}, G] \leq N_{n+1,i+1}(G)$.

  \item  $[N_{n,k}(G), G] = N_{n+1,k+1}(G)$ and $N_{n,k}(G)^{4^{m}} \leq N_{n+m,k}(G)$. 
\end{enumerate}
\end{lemma}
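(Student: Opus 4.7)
The strategy is to follow the proofs of the analogous statements in Huppert--Blackburn, Chapter VIII, originally carried out for the odd-prime lower $p$-central series, and adapt them to $p = 2$ with exponent $4 = p^2$. Three tools are essential: the commutator collection formula $[x^m,y] = \prod_{j\geq 1}[jx,y]^{\binom{m}{j}}$ (modulo deeper commutators), where $[jx,y]$ is the left-normed iterated commutator with $j$ copies of $x$; the Hall--Petresco identity $(xy)^m = x^m y^m \prod_{j\geq 2} c_j^{\binom{m}{j}}$ with $c_j \in \gamma_j(\langle x,y\rangle)$; and the $2$-adic valuation identity $v_2\!\binom{4^a}{j} = 2a - v_2(j)$.

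For part (a), let $x \in \gamma_i(G)$, $y \in G$, and expand $[x^{4^{n-i}},y]$ using the collection formula. Since $x \in \gamma_i(G)$, each $[jx,y]$ lies in $\gamma_{ji+1}(G)$. If $ji > n$, then $[jx,y] \in \gamma_{n+1}(G) \leq N_{n+1,i+1}(G)$. Otherwise, the factor of $N_{n+1,i+1}(G)$ containing $\gamma_{ji+1}(G)$ is $\gamma_{ji+1}(G)^{4^{n-ji}}$, and the required containment reduces to $v_2\!\binom{4^{n-i}}{j} \geq 2(n-ji)$, i.e.\ $2i(j-1) \geq v_2(j)$, which holds for all $i,j\geq 1$.

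For part (b), the inclusion $[N_{n,k}(G),G] \leq N_{n+1,k+1}(G)$ follows by distributing the commutator across the factors $\gamma_j(G)^{4^{n-j}}$ of $N_{n,k}(G)$ and invoking (a). The reverse inclusion uses the dual identity $[a,y]^m \equiv [a^m,y]$ modulo deeper commutators, realizing each generator of $N_{n+1,k+1}(G)$ as a commutator in $[N_{n,k}(G),G]$. For the power bound $N_{n,k}(G)^{4^m} \leq N_{n+m,k}(G)$, one inducts on $m$ to reduce to $m=1$, and then reduces to pairs $x \in \gamma_a(G)^{4^{n-a}}$, $y \in \gamma_b(G)^{4^{n-b}}$ via iterated Hall--Petresco.

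The main obstacle is this last reduction: showing that each correction $c_j^{\binom{4}{j}}$ sits inside $N_{n+1,k}(G)$. The naive bound $c_j \in \gamma_{jk}(G)$ is insufficient when $2k \leq n$, because the $2$-adic content of $\binom{4}{j}$ alone is too small (for instance, $v_2(6) = 1$ while the factor $\gamma_{2k}(G)^{4^{n+1-2k}}$ of $N_{n+1,k}(G)$ demands valuation $\geq 2(n+1-2k)$). The saving grace is that $c_j$ inherits additional $2$-divisibility from $x$ and $y$ themselves being high-order powers of $4$: the identity $[u^{4^s},v^{4^t}] \equiv [u,v]^{4^{s+t}}$ modulo deeper commutators implies, for $j=2$, that $c_2$ lies in $\gamma_{a+b}(G)^{4^{2n-a-b}}$ modulo deeper terms, with analogous statements for higher $j$. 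Combined with the valuation identity, this additional $2$-divisibility places each $c_j^{\binom{4}{j}}$ in the prescribed factor of $N_{n+1,k}(G)$.
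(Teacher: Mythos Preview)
Your treatment of part (a) is close in spirit to the paper's, though the paper quotes a precise result from Huppert--Blackburn, namely
\[
[\gamma_i(G)^{4^{n-i}}, G] \leq \prod_{r=0}^{2(n-i)} \gamma_{1+i\cdot 2^r}(G)^{2^{2(n-i)-r}},
\]
and then pairs consecutive factors to land in $N_{n+1,i+1}(G)$. Your binomial expansion $[x^m,y]=\prod_j[jx,y]^{\binom{m}{j}}$ is workable, but the phrase ``modulo deeper commutators'' hides real content: the correction terms carry exponents that are integer combinations of several binomials $\binom{m}{r}$ with $r\leq j$, not a single $\binom{m}{j}$, so you must check that the minimum valuation $2(n-i)-\lfloor\log_2 j\rfloor$ still suffices. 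It does, but this should be said.

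Where your outline genuinely diverges from the paper is in the power bound $N_{n,k}(G)^4\leq N_{n+1,k}(G)$, and here you have made the problem much harder than it is. The ``main obstacle'' you identify is illusory once the first half of (b) is proved: every Hall--Petresco correction term $c_j$ (for $j\geq 2$) lies in
\[
\gamma_2\bigl(N_{n,k}(G)\bigr)\ \leq\ [N_{n,k}(G),N_{n,k}(G)]\ \leq\ [N_{n,k}(G),G]\ =\ N_{n+1,k+1}(G)\ \leq\ N_{n+1,k}(G),
\]
so each $c_j$ is already inside $N_{n+1,k}(G)$ before raising to any power. The paper's proof of this step is therefore one line. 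Your proposed route through the identity $[u^{4^s},v^{4^t}]\equiv[u,v]^{4^{s+t}}$ and its ``analogous statements for higher $j$'' would itself need an induction on commutator weight to control the error terms, and is entirely unnecessary.
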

\begin{proof}
 Using \cite[Lemma 1.2]{huppertBlackburn}, we have
 \[
 [\gamma_i(G)^{4^{n-i}}, G] \leq \prod_{r=0}^{2(n-i)} \gamma_{1+i2^r}(G)^{2^{2(n-i)-r}}.
 \]
Taking two consecutive terms of this product into account, the inclusion
\[
 \gamma_{1 + i2^{2s+1}}(G)^{2^{2(n-i) - (2s+1)}}  \gamma_{1 + i2^{2s+2}}(G)^{2^{2(n-i) - (2s+2)}} \leq  \gamma_{i + s + 2}(G)^{4^{(n-i) - (s+1)}}
\]
holds for any $0 \leq s \leq n-i-1$, since $1 + i2^{2s+1} \geq i + s + 2$. This proves part (a) of the lemma, from which the inclusion $[N_{n,k}(G), G] \leq N_{n+1, k+1}(G)$ of the first part of (b) follows directly. For the other inclusion, we refer the reader to the proof of Theorem 2.4 of \cite{blackburnEvens}, the argument is essentially the same. The second part of (b) follows from the classical Hall-Petrescu formula: $ N_{n,k}(G)^4 \leq N_{n+1,k}(G) \cdot [N_{n,k}(G), N_{n,k}(G)] \leq N_{n+1, k}(G)$.
\end{proof}

An explicit formula for the $\bar\lambda$-series is thus at hand.

\begin{proposition} \label{prop-lambdaDescr}
 Let $G$ be a group. For any $n$, we have $\bar \lambda(G)_n = N_{n,1}(G)$.
\end{proposition}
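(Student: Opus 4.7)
The plan is to prove $\bar\lambda_n(G) = N_{n,1}(G)$ by induction on $n$. The base case $n = 1$ is immediate from the definitions since both sides equal $G = \gamma_1(G)$. For the inductive step, assume $\bar\lambda_n(G) = N_{n,1}(G)$ and unpack the recursive definition
\[
\bar\lambda_{n+1}(G) = \bar\lambda_n(G)^4 \, [\bar\lambda_n(G), G] = N_{n,1}(G)^4 \, [N_{n,1}(G), G].
\]
From here both inclusions have to be verified.

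For the inclusion $\bar\lambda_{n+1}(G) \leq N_{n+1,1}(G)$, I would invoke Lemma 2.3(b) directly: it gives $[N_{n,1}(G), G] = N_{n+1,2}(G)$ and $N_{n,1}(G)^4 \leq N_{n+1,1}(G)$. Since $N_{n+1,2}(G) \leq N_{n+1,1}(G)$ is visible from the definition (it omits only the leading factor $G^{4^n}$), the product lands inside $N_{n+1,1}(G)$, as required.

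For the reverse inclusion $N_{n+1,1}(G) \leq \bar\lambda_{n+1}(G)$, I would check each factor of the product $N_{n+1,1}(G) = G^{4^n} \gamma_2(G)^{4^{n-1}} \cdots \gamma_n(G)^4 \, \gamma_{n+1}(G)$ individually. For $1 \leq k \leq n$, the factor $\gamma_k(G)^{4^{n+1-k}}$ can be rewritten as $(\gamma_k(G)^{4^{n-k}})^4$; since $\gamma_k(G)^{4^{n-k}}$ is one of the factors of $N_{n,1}(G) = \bar\lambda_n(G)$, this is contained in $\bar\lambda_n(G)^4 \leq \bar\lambda_{n+1}(G)$. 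The remaining factor $\gamma_{n+1}(G) = [\gamma_n(G), G]$ is contained in $[N_{n,1}(G), G] = [\bar\lambda_n(G), G] \leq \bar\lambda_{n+1}(G)$, closing the induction.

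The statement is not hard once Lemma 2.3 is in hand; the only point requiring care is the bookkeeping in the reverse inclusion, where one must correctly match the exponents $4^{n+1-k}$ appearing in $N_{n+1,1}(G)$ with the exponents $4^{n-k}$ appearing in $N_{n,1}(G) = \bar\lambda_n(G)$, and remember that taking fourth powers commutes with generating subgroups only in the appropriate weak sense $(H^{4^{n-k}})^4 \supseteq H^{4^{n+1-k}}$ used here for $H = \gamma_k(G)$. The substantive content of the proposition is really encoded in Lemma 2.3(b), which in turn rests on the Hall--Petrescu formula and the commutator identity borrowed from Blackburn--Evens.
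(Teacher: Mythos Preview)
Your proof is correct and follows essentially the same approach as the paper: induction on $n$, with one inclusion handled by checking each factor $\gamma_k(G)^{4^{n+1-k}}$ of $N_{n+1,1}(G)$ against $N_{n,1}(G)^4[N_{n,1}(G),G]$, and the other inclusion deduced from the lemma on the $N$-series (what you call Lemma~2.3(b) is the paper's Lemma~\ref{lemma-Nseries}(b)). The only cosmetic difference is the order in which the two inclusions are treated.
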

\begin{proof}
 It suffices to verify that the series $N_{n,1}(G)$ satisfies the recursion formula for the $\bar \lambda$-series, and we do this by induction.  We plainly have $\gamma_i(G)^{4^{n-i+1}} \leq (\gamma_i(G)^{4^{n-i}})^{4} \leq N_{n,1}(G)^4$ for any $1 \leq i \leq n$ and $\gamma_{n+1}(G) = [\gamma_n(G), G] \leq [N_{n,1}(G), G]$, which implies $N_{n+1, 1}(G) \leq N_{n,1}(G)^4 [N_{n,1}(G), G]$. The reverse inclusion follows from Lemma \ref{lemma-Nseries}.
\end{proof}

When the nilpotent quotients of the given group $G$ are torsion-free (for example when $G$ is free), another connection between the groups $\bar \lambda_i(G)$ and $N_{i,j}(G)$ exists. In the proof, we will use a well-known consequence of the Hall-Petrescu formula, which we state beforehand.

\begin{lemma} \label{lemma-hallPet}
Let $G$ be a group and $x,y \in G$. Suppose $p$ is a prime and $k$ a positive integer. Then
\[
(xy)^{p^k} \equiv x^{p^k} y^{p^k} \mod{\gamma_2(\langle x,y \rangle)^{p^k} \prod_{i=1}^k \gamma_{p^i}(\langle x,y \rangle)^{p^{k-i}}}.
\]
If furthermore $x$ and $[x,y]$ belong to $H \leq G$, then
\[
 [x^{p^k},y] \equiv [x,y]^{p^k} \mod{\gamma_2(H)^{p^k} \prod_{i=1}^k \gamma_{p^i}(H)^{p^{k-i}}}.
\]
\end{lemma}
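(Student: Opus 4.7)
The plan is to derive both congruences from the Hall--Petrescu collection formula, which asserts that $(xy)^n = x^n y^n \prod_{i=2}^n c_i^{\binom{n}{i}}$ for some $c_i \in \gamma_i(\langle x, y\rangle)$. Specialising to $n = p^k$, the arithmetic input is Kummer's identity $v_p\bigl(\binom{p^k}{i}\bigr) = k - v_p(i)$, valid for $1 \leq i \leq p^k$.

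For the first congruence I split the resulting product according to $j := v_p(i)$. When $j = 0$, one has $p^k \mid \binom{p^k}{i}$, and since $i \geq 2$ the corresponding factor lies in $\gamma_i(\langle x, y\rangle)^{p^k} \leq \gamma_2(\langle x, y\rangle)^{p^k}$. When $1 \leq j \leq k$, the estimate $i \geq p^j$ forces $\gamma_i(\langle x, y\rangle) \leq \gamma_{p^j}(\langle x, y\rangle)$, and the factor lies in $\gamma_{p^j}(\langle x, y\rangle)^{p^{k-j}}$. Combining these across $j = 0, 1, \ldots, k$ produces exactly the stated modulus, with the boundary term $i = p^k$ (where $\binom{p^k}{p^k} = 1$) absorbed into $\gamma_{p^k}(\langle x, y\rangle)^{p^0}$.

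For the second congruence, set $z = [x, y] \in H$ and note that $x^y = xz$, so $(x^{p^k})^y = (xz)^{p^k}$. Since $x, z \in H$, the first part applied to the pair $(x, z)$ yields
\[
(xz)^{p^k} \equiv x^{p^k} z^{p^k} \pmod{M}, \quad M := \gamma_2(H)^{p^k} \prod_{i=1}^k \gamma_{p^i}(H)^{p^{k-i}}.
\]
Left-multiplying by $x^{-p^k}$ converts this to $[x^{p^k}, y] \equiv [x, y]^{p^k} \pmod{M}$; the manipulation is legitimate because $M$, being a product of characteristic subgroups of $H$, is normal in $H$ and is therefore stable under the conjugation by $[x,y]^{p^k}$ needed to move between left and right cosets. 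The main obstacle throughout is bookkeeping: namely, verifying that the indices produced by Hall--Petrescu, once weighted by Kummer's identity, reassemble precisely into the advertised modulus, and, for the second statement, confirming the normality that validates the final coset manipulation.
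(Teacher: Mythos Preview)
Your argument is correct and is the standard derivation of this well-known consequence of the Hall--Petrescu formula. The paper itself does not supply a proof: it introduces the lemma with the phrase ``a well-known consequence of the Hall--Petrescu formula, which we state beforehand'' and proceeds directly to the next proposition. Your write-up would therefore serve as a complete proof where the paper has none; the only minor remark is that the normality discussion at the end can be streamlined, since from $(xz)^{p^k} = x^{p^k} z^{p^k} m$ with $m \in M$ one gets $[x^{p^k},y] = x^{-p^k}(xz)^{p^k} = z^{p^k} m$ directly, and $M \trianglelefteq H$ (as a product of characteristic subgroups) together with $z^{p^k} \in H$ is all that is needed.
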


\begin{proposition} \label{prop-torsionFree1}
 Let $G$ be a group and $n,k$ positive integers.
\begin{enumerate}[\indent \em (a)]
   \item For any $x,y \in \gamma_{k-1}(G)$, we have  
\[ 
(xy)^{4^{n-k+1}} \equiv x^{4^{n-k+1}} y^{4^{n-k+1}} \mod{N_{n,k}(G) \cap N_{n+1, k-1}(G)}.
\]

   \item Any element of $N_{n,k}(G)$ can be written in the form $a_k^{4^{n-k}} a_{k+1}^{4^{n-k-1}} \cdots a_n$ for suitable $a_j \in \gamma_j(G)$.

   \item Suppose that the groups $G/\gamma_j(G)$ are torsion-free for all $1 \leq j \leq n$. Then $\bar\lambda_{n}(G) \cap \gamma_k(G) = N_{n,k}(G)$.
\end{enumerate}
\end{proposition}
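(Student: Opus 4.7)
I would tackle the three parts in order, since each feeds into the next.

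For part (a), the natural tool is the Hall--Petrescu formula in the form of Lemma \ref{lemma-hallPet}, applied to $H = \langle x, y \rangle$ with $p = 2$ and exponent $4^{n-k+1} = 2^{2(n-k+1)}$. Writing $K = 2(n-k+1)$, this gives a congruence of $(xy)^{4^{n-k+1}}$ with $x^{4^{n-k+1}} y^{4^{n-k+1}}$ modulo $\prod_{i=1}^{K} \gamma_{2^i}(H)^{2^{K-i}}$, and the hypothesis $H \le \gamma_{k-1}(G)$ upgrades each $\gamma_{2^i}(H)$ to $\gamma_{2^i(k-1)}(G)$. It then remains to verify that every resulting factor sits inside both $N_{n,k}(G)$ and $N_{n+1,k-1}(G)$. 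Mimicking the pairing trick from the proof of Lemma \ref{lemma-Nseries}(a), I would group consecutive indices $i = 2s{+}1$ and $i = 2s{+}2$ to trade a half-power of $2$ against the doubling of the $\gamma$-weight, extracting clean $4$-powers of commutators of sufficiently high weight.

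For part (b), I would induct downward on $k$, the base case $k = n$ being $N_{n,n}(G) = \gamma_n(G)$. For the step, decompose $N_{n,k}(G) = \gamma_k(G)^{4^{n-k}} \cdot N_{n,k+1}(G)$ and write a general element as $\bigl( \prod_j x_j^{4^{n-k}} \bigr) \cdot u$ with $x_j \in \gamma_k(G)$ and $u \in N_{n,k+1}(G)$. Applying part (a) (with $k$ replaced by $k+1$) iteratively collects the $4^{n-k}$-th powers into a single $a_k^{4^{n-k}}$ with $a_k = \prod_j x_j \in \gamma_k(G)$, modulo an error in $N_{n,k+1}(G)$; the inductive hypothesis then produces the required normal form for the remaining $N_{n,k+1}$-factor.

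Part (c) reduces, via Proposition \ref{prop-lambdaDescr}, to the equality $N_{n,1}(G) \cap \gamma_k(G) = N_{n,k}(G)$; only the inclusion $\subseteq$ needs work. Given $z$ in this intersection, part (b) yields $z = a_1^{4^{n-1}} a_2^{4^{n-2}} \cdots a_n$ with $a_j \in \gamma_j(G)$. Projecting to the torsion-free abelian group $G/\gamma_2(G)$ forces $\bar a_1^{4^{n-1}} = 1$, and hence $a_1 \in \gamma_2(G)$. Rewriting $a_1^{4^{n-1}} = (a_1^4)^{4^{n-2}}$ and combining with $a_2^{4^{n-2}}$ via part (a) collapses the first two factors into a single $4^{n-2}$-power with base in $\gamma_2(G)$, up to an error in $N_{n,3}(G)$ which part (b) absorbs into the tail. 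Iterating through the torsion-free quotients $G/\gamma_j(G)$ for $j = 3, \ldots, k$ shifts the leading $\gamma$-index upward by one at each stage, so after $k-1$ rounds the representative for $z$ lies in $N_{n,k}(G)$.

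The main obstacle throughout is numerical bookkeeping rather than a deep new idea: in part (a), one must match the Hall--Petrescu mixed $2$-power factors simultaneously against the two distinct subgroups $N_{n,k}(G)$ and $N_{n+1,k-1}(G)$, and in part (c), the error produced at each iterative step must be confined to a deep enough $N_{n,j+1}$ for the next torsion-freeness argument to apply cleanly.
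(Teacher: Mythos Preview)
Your approach is correct and matches the paper's proof essentially line for line. Part (a) uses Hall--Petrescu with the same consecutive-pairing trick as in Lemma \ref{lemma-Nseries}(a); part (b) is the same reverse induction on $k$, collapsing the $4^{n-k}$-th powers via (a); and your iterative version of (c) is just the paper's induction on $k$ unwound explicitly (the paper phrases the step as: by induction $x \in N_{n,k-1}(G)$, write $x = a^{4^{n-k+1}} y$ with $a \in \gamma_{k-1}(G)$ and $y \in N_{n,k}(G)$, then torsion-freeness of $\gamma_{k-1}(G)/\gamma_k(G)$ forces $a \in \gamma_k(G)$).
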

\begin{proof}
 Note first that $\gamma_a(G)^{4^b} \leq N_{n,k}(G)$ whenever $a \geq k$ and $a+b \geq n$.

(a) We use Lemma \ref{lemma-hallPet}. Note that $\gamma_2(\langle x,y \rangle) \leq \gamma_{2(k-1)}(G)$. Taking two consecutive terms of the product $\prod_{i=1}^{2(n-k+1)} \gamma_{2^i(k-1)}(G)^{2^{2(n-k+1)-i}}$ into account, we see that for any $0 \leq s \leq n-k$, the group
\[
 \gamma_{2^{2s+1}(k-1)}(G)^{2^{2(n-k+1)-(2s+1)}} \gamma_{2^{2s+2}(k-1)}(G)^{2^{2(n-k+1)-(2s+2)}}
\]
is contained in $\gamma_{2^{2s+1}(k-1)}(G)^{2^{2(n-k+1)-(2s+2)}}$, which is itself a subgroup of the intersection $N_{n,k}(G) \cap N_{n+1, k-1}(G)$, since $2^{2s+1}(k-1) + (n-k+1)-(s+1) \geq 2^{2s+1} + n - s \geq n+1$.

 (b) This is proved by reverse induction on $k$, the case $k=n$ being trivial. Suppose $x \in N_{n,k-1}(G) = \gamma_{k-1}(G)^{4^{n-k+1}} \gamma_{k}(G)^{4^{n-k}} \cdots \gamma_n(G)$. Then there exist elements $x_1, \dots, x_l \in \gamma_{k-1}(G)$ such that $x = x_1^{4^{n-k-1}} x_2^{4^{n-k-1}} \cdots x_l^{4^{n-k-1}} c$, where $c \in N_{n,k}(G)$. We finish off using (a) followed by the induction hypothesis.

(c) We prove the claim by induction on $k$. First note that the inclusion $N_{n,k}(G) \leq \bar\lambda_{n}(G) \cap \gamma_k(G)$ holds without the additional assumptions. Now take any $x \in \bar\lambda_{n}(G) \cap \gamma_k(G)$. By the induction hypothesis and (a), there exist elements $a \in \gamma_{k-1}(G)$ and $y \in N_{n, k}(G)$ such that $x = a^{4^{n-k+1}} y$. This implies  $a^{4^{n-k+1}} \in \gamma_k(G)$, and as the group $\gamma_{k-1}(G)/\gamma_k(G)$ is torsion-free, we have $a \in \gamma_k(G)$ and thus $x \in N_{n+1,k}(G)$.
\end{proof}

We now prove further properties of the $\bar \lambda$-series which are analogues of the well-known properties of the lower $p$-central series.

\begin{proposition}
 Let $G$ be a group and $m,n$ positive integers. Then
 \[
 \bar\lambda_n(G)^{4^m} \leq \bar\lambda_{n+m}(G) \quad \text{and} \quad [\bar\lambda_n(G), \bar\lambda_m(G)] \leq \bar\lambda_{n+m}(G).
 \]
\end{proposition}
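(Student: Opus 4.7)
The plan is to reduce both claims to properties of the auxiliary series $N_{n,k}$ and then leverage the recursive definition of $\bar\lambda$ together with standard commutator calculus.

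Part (1) falls out of the work already done: Proposition~\ref{prop-lambdaDescr} identifies $\bar\lambda_n(G)$ with $N_{n,1}(G)$, and Lemma~\ref{lemma-Nseries}(b) immediately yields $\bar\lambda_n(G)^{4^m} = N_{n,1}(G)^{4^m} \leq N_{n+m,1}(G) = \bar\lambda_{n+m}(G)$.

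For part (2), I would induct on $m$, keeping $n$ free. The case $m=1$ is the defining relation $[\bar\lambda_n(G), G] \leq \bar\lambda_{n+1}(G)$. For the step, write $\bar\lambda_m(G) = \bar\lambda_{m-1}(G)^4 \cdot [\bar\lambda_{m-1}(G), G]$ and split the commutator into
\[
[\bar\lambda_n(G), \bar\lambda_{m-1}(G)^4] \cdot [\bar\lambda_n(G), [\bar\lambda_{m-1}(G), G]].
\]
The second factor is handled by the three subgroup lemma, which bounds it by $[\bar\lambda_{m-1}(G), [G, \bar\lambda_n(G)]] \cdot [G, [\bar\lambda_n(G), \bar\lambda_{m-1}(G)]]$; feeding in $[G, \bar\lambda_n(G)] \leq \bar\lambda_{n+1}(G)$ (definition) and the induction hypothesis applied to the pairs $(n+1, m-1)$ and $(n, m-1)$, one lands in $\bar\lambda_{n+m}(G)$.

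The main obstacle is the factor $[\bar\lambda_n(G), \bar\lambda_{m-1}(G)^4]$. For $a \in \bar\lambda_n(G)$ and $y \in \bar\lambda_{m-1}(G)$, I would expand $[a, y^4] = [a,y]^{y^3} [a,y]^{y^2} [a,y]^{y} [a,y]$ and exploit the identity $[a,y]^{y^i} = [a,y] \cdot (\text{nested commutators of } [a,y] \text{ with copies of } y)$. Each correction term lies in $[[\bar\lambda_n(G), \bar\lambda_{m-1}(G)], \bar\lambda_{m-1}(G)]$, which the induction hypothesis (applied at levels $(n+m-1, m-1)$) places inside $\bar\lambda_{n+2m-2}(G) \leq \bar\lambda_{n+m}(G)$ (valid since $m \geq 2$ in the inductive step). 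Modulo $\bar\lambda_{n+m}(G)$ the product therefore collapses to $[a,y]^4 \in \bar\lambda_{n+m-1}(G)^4 \leq \bar\lambda_{n+m}(G)$, where the last inclusion is the definition of the $\bar\lambda$-series (or part~(1)). Normality of the characteristic subgroup $\bar\lambda_{n+m}(G)$ ensures that the collection process introduces nothing outside it, closing the induction.
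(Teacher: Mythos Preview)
Your proof is correct and follows essentially the same architecture as the paper's: induction on $m$, splitting via $\bar\lambda_m = \bar\lambda_{m-1}^4[\bar\lambda_{m-1},G]$, the Three Subgroups Lemma for the commutator piece, and a collection argument showing $[a,y^4]\equiv[a,y]^4$ modulo $\bar\lambda_{n+m}$ for the fourth-power piece. The only cosmetic differences are that for part~(1) the paper simply iterates the defining inclusion $\bar\lambda_n^4\leq\bar\lambda_{n+1}$ rather than going through $N_{n,1}$, and for the fourth-power factor the paper invokes the packaged Hall--Petrescu consequence (Lemma~\ref{lemma-hallPet}) in place of your elementary expansion of $[a,y^4]$; your direct argument is slightly more self-contained but otherwise equivalent.
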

\begin{proof}
The first inclusion is clear. We prove the second one by induction on $m$. By the Three Subgroups Lemma, $[\bar\lambda_n(G), [\bar\lambda_m(G), G]] \leq \bar\lambda_{m+n+1}(G)$. It remains to prove that $[\bar\lambda_n(G), \bar\lambda_m(G)^4]$ is contained in $\bar \lambda_{n+m+1}(G)$. Picking any $x \in \bar\lambda_m(G)$ and $y \in \bar\lambda_n(G)$, we have $[x,y] \in \bar\lambda_{n+m}(G)$ and $[x,y,x] \in \bar\lambda_{n+2m}(G)$ by induction. Putting $H = \langle x, [x,y] \rangle$ and using Lemma \ref{lemma-hallPet}, we get
\[
 [x^4, y] \equiv [x,y]^4 \mod{ \gamma_2(H)^2 \gamma_4(H) }.
\]
Now $[x,y]^4 \in \bar\lambda_{n+m}(G)^4 \leq \bar\lambda_{n+m+1}(G)$ and $\gamma_2(H) \leq \bar\lambda_{n+2m}(G)$, which implies $\gamma_2(H)^2 \gamma_4(H) \leq \gamma_2(H) \leq \bar\lambda_{n+m+1}(G)$. This concludes our proof.
\end{proof}

Continuing in the manner of \cite{bubboloni}, we provide a version of \cite[VIII, Theorem 1.8]{huppertBlackburn}.

\begin{lemma} \label{lemma-independent}
 Let $G$ be a group and $n$ a positive integer. If $x \in \gamma_j(G)$ and $y \in \gamma_j(G)^4 \gamma_{j+1}(G)$ for some $1 \leq j \leq n$, then $(xy)^{4^{n-j}} \equiv x^{4^{n-j}} \mod{ N_{n+1, j}(G) }$.
\end{lemma}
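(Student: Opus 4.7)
The plan is to reduce the statement to Proposition \ref{prop-torsionFree1}(a), which does not require any torsion-free hypothesis. Writing $y = a^4 b$ with $a \in \gamma_j(G)$ and $b \in \gamma_{j+1}(G)$, the strategy is to expand $(xy)^{4^{n-j}}$ into $x^{4^{n-j}}y^{4^{n-j}}$ modulo $N_{n+1,j}(G)$ and then to check separately that $y^{4^{n-j}}$ already lies in $N_{n+1,j}(G)$.

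For the first step, I would apply Proposition \ref{prop-torsionFree1}(a) with the index $k = j+1$ to the pair $x,y \in \gamma_j(G)$ (noting that $\gamma_j(G)^4\gamma_{j+1}(G) \subseteq \gamma_j(G)$). This yields
\[
(xy)^{4^{n-j}} \equiv x^{4^{n-j}} y^{4^{n-j}} \pmod{N_{n,j+1}(G) \cap N_{n+1,j}(G)},
\]
and in particular modulo $N_{n+1,j}(G)$. So the problem reduces to showing $y^{4^{n-j}} \in N_{n+1,j}(G)$.

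For the second step, I would apply the same proposition once more to the pair $a^4, b$, both of which lie in $\gamma_j(G)$ (using $a^4 \in \gamma_j(G)^4 \leq \gamma_j(G)$ and $b \in \gamma_{j+1}(G) \leq \gamma_j(G)$). This gives
\[
y^{4^{n-j}} = (a^4 b)^{4^{n-j}} \equiv a^{4^{n-j+1}} b^{4^{n-j}} \pmod{N_{n+1,j}(G)}.
\]
Both factors on the right are manifestly in $N_{n+1,j}(G)$: the first belongs to $\gamma_j(G)^{4^{n-j+1}}$ and the second to $\gamma_{j+1}(G)^{4^{n-j}}$, each of which is one of the leading pieces of $N_{n+1,j}(G)$. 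Combining the two displays then yields the desired congruence.

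I do not foresee a serious obstacle: the entire argument is a double application of Proposition \ref{prop-torsionFree1}(a), which itself is the Hall--Petrescu-style computation already carried out. The only point that needs a moment of care is verifying that the modulus $N_{n,j+1}(G) \cap N_{n+1,j}(G)$ produced by Proposition \ref{prop-torsionFree1}(a) is contained in $N_{n+1,j}(G)$, which is immediate from the definitions.
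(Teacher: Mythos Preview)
Your approach matches the paper's: both first apply Proposition~\ref{prop-torsionFree1}(a) with $k=j+1$ to split $(xy)^{4^{n-j}}$ into $x^{4^{n-j}}y^{4^{n-j}}$ modulo $N_{n+1,j}(G)$, and then argue that $y^{4^{n-j}}\in N_{n+1,j}(G)$. One small slip: writing $y=a^4 b$ with a \emph{single} $a\in\gamma_j(G)$ is not quite legitimate, since an element of $\gamma_j(G)^4$ is in general only a product $a_1^4\cdots a_m^4$ of fourth powers; your second application of Proposition~\ref{prop-torsionFree1}(a) would then have to be iterated over such a product (which works, since that proposition says exactly that raising to the $4^{n-j}$-th power is a homomorphism on $\gamma_j(G)$ modulo $N_{n+1,j}(G)$). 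The paper avoids this detour by noting that $y\in\gamma_j(G)^4\gamma_{j+1}(G)=N_{j+1,j}(G)$ and invoking Lemma~\ref{lemma-Nseries}(b), which gives $N_{j+1,j}(G)^{4^{n-j}}\le N_{n+1,j}(G)$ in one stroke.
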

\begin{proof}
 Both $x$ and $y$ are elements of $\gamma_i(G)$. By Proposition \ref{prop-torsionFree1} with $k=i+1$, we have $(xy)^{4^{n-i}} \equiv x^{4^{n-i}} y^{4^{n-i}} \mod{N_{n+1,i}(G)}$. Since $y \in N_{i+1,i}(G)$, we also have $y^{4^{n-i}} \in N_{n+1,i}(G)$ by Proposition \ref{lemma-Nseries}, hence the lemma.
\end{proof}

\begin{theorem} \label{thm-lambdaSeries}
  Let $G$ be a group and $n,k$ positive integers.
  \begin{enumerate}[\indent \em (a)]
   \item There is an epimorphism $\beta$ from the direct product
   \[
   \frac{\gamma_k(G)}{\gamma_k(G)^4 \gamma_{k+1}(G)} \times \frac{\gamma_{k+1}(G)}{\gamma_{k+1}(G)^4 \gamma_{k+2}(G)} \times \cdots \times \frac{\gamma_{n}(G)}{\gamma_{n}(G)^4\gamma_{n+1}(G)}
   \]
   onto the quotient $N_{n,k}(G)/N_{n+1,k}(G)$, given by 
\[ 
\beta(\bar{a}_k, \bar{a}_{k+1}, \dots, \bar{a}_{n}) = a_k^{4^{n-k}} a_{k+1}^{4^{n-k-1}} \cdots a_{n} N_{n+1,k}(G),
\]
 where $a_j \in \gamma_j(G)$ and $\bar{a}_j = a_j \gamma_j(G)^4 \gamma_{j+1}(G)$.
   
   \item Suppose that the groups $G/\gamma_j(G)$ are torsion-free for all $1 \leq j \leq n$. Then the map $\beta$ of {\em (a)} is an isomorphism.
  \end{enumerate}
\end{theorem}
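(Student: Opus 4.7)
\emph{Part (a).} The map $\beta$ will be assembled from a set-theoretic map $\tilde\beta$ on the unquotiented product $\gamma_k(G) \times \cdots \times \gamma_n(G)$. A key initial observation is that the target $N_{n,k}(G)/N_{n+1,k}(G)$ is abelian: by Lemma~\ref{lemma-Nseries}(b), $[N_{n,k}(G), N_{n,k}(G)] \leq [N_{n,k}(G),G] = N_{n+1,k+1}(G) \leq N_{n+1,k}(G)$. Working in this abelian quotient I can rearrange factors freely. I would then verify three things: first, $\tilde\beta$ is a homomorphism, by applying Proposition~\ref{prop-torsionFree1}(a) coordinate by coordinate --- it shows $(a_j b_j)^{4^{n-j}} \equiv a_j^{4^{n-j}} b_j^{4^{n-j}}$ modulo $N_{n+1,j}(G) \leq N_{n+1,k}(G)$ --- and then permuting factors in the abelian quotient; second, $\tilde\beta$ kills the subgroup $\prod_j (\gamma_j(G)^4 \gamma_{j+1}(G))$, by applying Lemma~\ref{lemma-independent} with $x=1$ in each coordinate; and third, $\tilde\beta$ is surjective, which is exactly Proposition~\ref{prop-torsionFree1}(b).

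\emph{Part (b).} For injectivity under the torsion-free hypothesis I would argue by induction on $n-k$. The base case $n=k$ reads $a_n \in N_{n+1,n}(G) = \gamma_n(G)^4 \gamma_{n+1}(G)$, so $\bar a_n = 1$. For the inductive step, suppose $a_k^{4^{n-k}} a_{k+1}^{4^{n-k-1}} \cdots a_n \in N_{n+1,k}(G)$. Reducing modulo $\gamma_{k+1}(G)$ kills all factors but the first, and leaves $\bar a_k^{4^{n-k}}$ lying in $(\gamma_k(G)/\gamma_{k+1}(G))^{4^{n+1-k}}$. Since $\gamma_k(G)/\gamma_{k+1}(G)$ is torsion-free abelian, cancellation by $4^{n-k}$ yields $\bar a_k \in (\gamma_k(G)/\gamma_{k+1}(G))^4$, i.e.\ $a_k \in \gamma_k(G)^4 \gamma_{k+1}(G)$, so $\bar a_k = 1$ in the source.

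Now write $a_k = c^4 d$ with $c \in \gamma_k(G)$ and $d \in \gamma_{k+1}(G)$. Two successive applications of Proposition~\ref{prop-torsionFree1}(a) give, modulo $N_{n+1,k}(G)$, first $a_k^{4^{n-k}} \equiv d^{4^{n-k}}$ (the term $c^{4^{n-k+1}}$ being absorbed into $\gamma_k(G)^{4^{n+1-k}} \leq N_{n+1,k}(G)$), and subsequently $d^{4^{n-k}} a_{k+1}^{4^{n-k-1}} \equiv (d^4 a_{k+1})^{4^{n-k-1}}$. Setting $a'_{k+1} = d^4 a_{k+1}$, the original condition reduces to $(a'_{k+1})^{4^{n-k-1}} a_{k+2}^{4^{n-k-2}} \cdots a_n \in N_{n+1,k}(G)$.

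The main obstacle is that to invoke the induction hypothesis at index $k+1$, this element must lie in $N_{n+1,k+1}(G)$ rather than merely $N_{n+1,k}(G)$. This rests on the auxiliary claim that $N_{n+1,k}(G) \cap \gamma_{k+1}(G) = N_{n+1,k+1}(G)$ under the torsion-free hypothesis: using Proposition~\ref{prop-torsionFree1}(b), any element of $N_{n+1,k}(G)$ may be written as $b_k^{4^{n+1-k}} z$ with $b_k \in \gamma_k(G)$ and $z \in N_{n+1,k+1}(G)$, and if the element lies in $\gamma_{k+1}(G)$ then $b_k^{4^{n+1-k}} \in \gamma_{k+1}(G)$, forcing $b_k \in \gamma_{k+1}(G)$ by torsion-freeness and hence $b_k^{4^{n+1-k}} \in \gamma_{k+1}(G)^{4^{n+1-k}} \leq N_{n+1,k+1}(G)$. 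With the claim in hand, the induction hypothesis gives $\bar a'_{k+1} = \bar a_{k+2} = \cdots = \bar a_n = 1$, and since $d^4 \in \gamma_{k+1}(G)^4$ we have $\bar a'_{k+1} = \bar a_{k+1}$, completing the proof.
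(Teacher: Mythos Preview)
Your proof is correct and follows essentially the same line as the paper's: part (a) invokes the same three ingredients (Proposition~\ref{prop-torsionFree1}(a),(b) and Lemma~\ref{lemma-independent}), and part (b) peels off the leading coordinate using torsion-freeness of $\gamma_k(G)/\gamma_{k+1}(G)$ and then recurses. The only cosmetic difference is that the paper organizes (b) as a forward induction on the coordinate index $j$ and cites Proposition~\ref{prop-torsionFree1}(c) for the intersection $N_{n+1,k}(G)\cap\gamma_j(G)=N_{n+1,j}(G)$, whereas you induct on $n-k$ and reprove the needed special case $j=k+1$ of that intersection directly.
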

\begin{proof}
(a) It follows from the previous Lemma that the mapping $\beta$ is well-defined. It is a homomorphism by Proposition \ref{prop-torsionFree1} (a), and it is surjective by (b) of the same proposition.

(b) We are proving that $\beta$ is injective. So suppose $a_j,b_j$ are elements of $\gamma_j(G)$, $k \leq j \leq n$, for which
\[
 a_k^{4^{n-k}} a_{k+1}^{4^{n-k-1}} \cdots a_{n} \equiv b_k^{4^{n-k}} b_{k+1}^{4^{n-k-1}} \cdots b_{n} \mod{ N_{n+1,k}(G) }.
\]
We prove that this implies $a_j \equiv b_j \mod{\gamma_j(G) \gamma_{j+1}(G)^4}$ for all $k \leq j \leq n$ by induction on $j$. By hypothesis, $a_l \equiv b_l \mod{\gamma_l(G) \gamma_{l+1}(G)^4}$ for all $k \leq l \leq j-1$, so $a_l^{4^{n-l-1}} \equiv b_l^{4^{n-l-1}} \mod{N_{n,k}(G)}$ by Lemma \ref{lemma-independent}. This now implies
\[
 a_j^{4^{n-j}} a_{j+1}^{4^{n-j-1}} \cdots a_{n} \equiv b_j^{4^{n-j}} b_{j+1}^{4^{n-j-1}} \cdots b_{n} \mod{ N_{n+1,k}(G) \cap \gamma_j(G)}.
\]
By Proposition \ref{prop-torsionFree1} (c), the intersection $N_{n+1, k}(G) \cap \gamma_j(G)$ equals $N_{n+1, j}(G)$. Note that the obtained congruence also holds when $j=k$. Hence 
\[
 a_j^{4^{n-j}} \equiv b_j^{4^{n-j}} \mod{ \gamma_j(G)^{4^{n-j}} \gamma_{j+1}(G) }.
\]
The group $\gamma_j(G)/\gamma_{j+1}(G)$ is abelian, so there exists an element $c \in \gamma_j(G)$ such that $a_j^{4^{n-j+1}} \equiv (b_jc^4)^{4^{n-j+1}} \mod{ \gamma_{j+1}(G) }$. The assumption that the factor group $\gamma_j(G)/\gamma_{j+1}(G)$ is also torison-free now gives us the desired congruence $a_j \equiv b_j \mod{ \gamma_j(G)^4 \gamma_{j+1}(G) }$.
\end{proof}

We will primarily be interested in the case when $G$ is a free group of finite rank. In this case, the stated theorem is fully applicable, and so the factor groups of the $\bar\lambda$-series are particularly easy to describe.

\begin{theorem} \label{thm-freeLambda}
 Let $F_r$ be the free group of rank $r$ and $n$ a positive integer.
 \begin{enumerate}[\indent \em (a)]
	\item The group $\bar\lambda_n(F_r)/\bar\lambda_{n+1}(F_r)$ can be embedded in $\bar\lambda_{n+1}(F_r)/\bar\lambda_{n+2}(F_r)$. Moreover, there exists a base $\mathcal A_n \mod{\bar\lambda_{n+1}}$ of $\bar\lambda_n/\bar\lambda_{i+n}$ such that $\mathcal A_n^4$ is independent in $\bar\lambda_{n+1}/\bar\lambda_{n+2}$.
	
  \item If $\mathcal{C}_n$ is the set of all basic commutators of weight $n$ in a fixed sequence, then the image of  $\mathcal{B}_n := \mathcal{C}_1^{4^{n-1}} \cup \mathcal{C}_2^{4^{n-2}} \cup \cdots \cup \mathcal{C}_n$ in $\bar\lambda_n(F_r)/\bar\lambda_{n+1}(F_r)$ is a base.

  \item The map $\varphi_n \colon \bar\lambda_{n}(F_r) \to \bar\lambda_{n+1}(F_r)/\bar\lambda_{n+2}(F_r)$ given by $\varphi_n(x) = x^4 \bar\lambda_{n+2}(F_r)$ is a homomorphism and $\ker \varphi_n = \bar\lambda_{n+1}$.
 \end{enumerate}
\end{theorem}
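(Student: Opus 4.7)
The plan is to apply Theorem~\ref{thm-lambdaSeries} with $G = F_r$ --- allowed because $F_r/\gamma_j(F_r)$ is torsion-free for every $j$ --- together with the classical Magnus--Witt fact that $\gamma_j(F_r)/\gamma_{j+1}(F_r)$ is free abelian of rank $c(r,j)$ with basis $\mathcal C_j$. The isomorphism $\beta$ of Theorem~\ref{thm-lambdaSeries}(b) then identifies $\bar\lambda_n(F_r)/\bar\lambda_{n+1}(F_r)$ with $\bigoplus_{j=1}^{n} \gamma_j(F_r)/\gamma_j(F_r)^4\gamma_{j+1}(F_r)$, each summand being a free $\mathbb Z/4$-module of rank $c(r,j)$ with basis the image of $\mathcal C_j$.

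Part (b) drops out immediately: the natural basis of the direct sum is transported by $\beta$ to the image of $\mathcal B_n = \bigcup_j \mathcal C_j^{4^{n-j}}$, so $\mathcal B_n$ is a basis of the free $\mathbb Z/4$-module $\bar\lambda_n(F_r)/\bar\lambda_{n+1}(F_r)$. Part (a) then follows by taking $\mathcal A_n = \mathcal B_n$: the set $\mathcal A_n^4 = \bigcup_j \mathcal C_j^{4^{(n+1)-j}}$ is a subset of $\mathcal B_{n+1}$, which by (b) at level $n+1$ is a basis of $\bar\lambda_{n+1}(F_r)/\bar\lambda_{n+2}(F_r)$, so $\mathcal A_n^4$ is independent and the assignment $a \mapsto a^4$ extends to an embedding.

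For part (c), I would apply Hall--Petrescu: Lemma~\ref{lemma-hallPet} gives $(xy)^4 \equiv x^4 y^4 \bmod \gamma_2(\langle x,y\rangle)^{4}\gamma_2(\langle x,y\rangle)^{2}\gamma_4(\langle x,y\rangle)$. Iterating the commutator bound $[\bar\lambda_n,\bar\lambda_m]\leq \bar\lambda_{n+m}$ established earlier yields $\gamma_i(\langle x,y\rangle) \leq \gamma_i(\bar\lambda_n(F_r)) \leq \bar\lambda_{in}(F_r)$, which for $n \geq 2$ is enough to absorb every correction factor into $\bar\lambda_{n+2}(F_r)$, making $\varphi_n$ a homomorphism. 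The inclusion $\bar\lambda_{n+1}(F_r) \subseteq \ker \varphi_n$ is immediate from $\bar\lambda_{n+1}^4 \leq \bar\lambda_{n+2}$. For the reverse, write $x \equiv \prod_{a \in \mathcal A_n} a^{\alpha_a} \bmod \bar\lambda_{n+1}(F_r)$ with $\alpha_a \in \mathbb Z/4$; then $\varphi_n(x) = \prod (a^4)^{\alpha_a}$ in $\bar\lambda_{n+1}(F_r)/\bar\lambda_{n+2}(F_r)$, and if $\varphi_n(x) = 1$ the independence of $\mathcal A_n^4$ from part (a) forces every $\alpha_a = 0$, giving $x \in \bar\lambda_{n+1}(F_r)$. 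The main obstacle is the verification that the Hall--Petrescu correction factors actually land in $\bar\lambda_{n+2}$; this is precisely what the factor of $4$ built into the definition of $\bar\lambda$ --- via the commutator bound $\gamma_i(\bar\lambda_n)\leq \bar\lambda_{in}$ --- is designed to achieve.
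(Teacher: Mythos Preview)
Your argument for parts (a) and (b) is correct and matches the paper's approach: the paper likewise invokes Theorem~\ref{thm-lambdaSeries} (and refers to \cite{bubboloni} for the odd-prime analogue), and your explicit choice $\mathcal A_n = \mathcal B_n$ with $\mathcal A_n^4 \subset \mathcal B_{n+1}$ is exactly the right idea. For part (c), your treatment of the kernel via the basis $\mathcal A_n$ and the independence of $\mathcal A_n^4$ is also the paper's argument.

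There is, however, a genuine gap in your proof that $\varphi_n$ is a homomorphism: the case $n=1$ is not covered. You correctly observe that for $n\geq 2$ the commutator bound $\gamma_i(\bar\lambda_n(F_r))\leq \bar\lambda_{in}(F_r)\leq \bar\lambda_{n+2}(F_r)$ absorbs all Hall--Petrescu correction terms. But for $n=1$ the correction factor $\gamma_2(\langle x,y\rangle)^2$ lies only in $\gamma_2(F_r)^2$, and this is \emph{not} contained in $\bar\lambda_3(F_r)=F_r^{16}\gamma_2(F_r)^4\gamma_3(F_r)$: for instance $[x,y]^2$ is not in that subgroup, since the nilpotent quotients of $F_r$ are torsion-free. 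So the bound $\gamma_i(\bar\lambda_1)\leq\bar\lambda_i$ is too weak here. The paper handles $n=1$ separately by invoking Proposition~\ref{prop-torsionFree1}(a) with $k=2$, which gives $(xy)^4\equiv x^4y^4 \pmod{N_{2,2}(F_r)\cap N_{3,1}(F_r)}\subseteq \bar\lambda_3(F_r)$ directly. You should add this case.
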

\begin{proof}
 For (a) and (b) we invoke Theorem \ref{thm-lambdaSeries} and refer the reader to \cite[Theorem 2.5]{bubboloni}, the proofs there are directly applicable in the case $p=2$. To prove (c), we first show that the maps $\varphi_n$ are indeed homomorphisms. When $n \geq 2$, this is true because for any $x,y \in \bar\lambda_{n}(F_r)$, we have $(xy)^4 = x^4 y^4 c$ for some $c \in \gamma_2(\bar\lambda_n(F_r)) \leq \bar\lambda_{2n}(F_r) \leq \bar\lambda_{n+2}(F_r)$. And when $n=1$, this is true by Proposition \ref{prop-torsionFree1} (a). We now show that $\ker \varphi_n(F_r) = \bar\lambda_{n+1}(F_r)$. Clearly $\bar\lambda_{n+1}(F_r) \leq \ker \varphi_n$, so pick any $x \in \lambda_n(F_r)$ with the property $x^4 \in \lambda_{n+2}(F_r)$. By (a), we can choose a base $\mathcal A_n \mod{ \bar\lambda_{n+1}(F_r) }$ of $\bar\lambda_n(F_r)/\bar\lambda_{n+1}(F_r)$ such that $\mathcal A_n^4 \mod{ \bar\lambda_{n+2}(F_r) }$ is independent in $\bar\lambda_{n+1}(F_r)/\bar\lambda_{n+2}(F_r)$. Expanding $x$ with respect to this base and using the fact that $\varphi_n$ is a homomorphism, we conclude $x \in \lambda_{n+1}(F_r)$.
\end{proof}
\section{Free $2$-central groups}
\label{free2Cent}

\noindent We now use the results of the previous section to extend the theory of \cite{bubboloni} to the case $p=2$. For any positive integers $n$ and $r \geq 2$, we set
\[
 {}^n G_r = \frac{F_r}{\bar\lambda_{n+1}(F_r)}.
\]

\begin{theorem}
The group ${}^n G_r$ is a $2$-central group of exponent $4^n$ and of nilpotency class $n$. Its $\bar \lambda$-series is equal to $\bar\lambda_i({}^n G_r)  = \bar\lambda_i(F_r)/\bar\lambda_{n+1}(F_r)$, and we have $\Omega_{2i}({}^n G_r) = \bar\lambda_{n+1-i}({}^n G_r)$. The order of ${}^n G_r$ equals $4^{b_1 + \cdots + b_n}$, where $b_i$ is the number of basic commutators of weight at most $i$ on $r$ generators. In particular, $|\Omega_2({}^n G_r)| = 4^{b_n}$.
\end{theorem}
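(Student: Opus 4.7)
The plan is to unpack the theorem by leveraging the tight structural results for the $\bar\lambda$-series assembled in Section \ref{lambdaSeries}. First I want to identify the $\bar\lambda$-series of ${}^nG_r$ itself. Since both $A \mapsto A^4$ and $(A,B) \mapsto [A,B]$ commute with quotients, a routine induction shows $\bar\lambda_i({}^nG_r) = \bar\lambda_i(F_r)/\bar\lambda_{n+1}(F_r)$ for all $1 \leq i \leq n+1$. In particular $\bar\lambda_{n+1}({}^nG_r) = 1$, and the defining recursion $\bar\lambda_{n+1}(G) = \bar\lambda_n(G)^4 [\bar\lambda_n(G), G]$ then forces $[\bar\lambda_n({}^nG_r), {}^nG_r] = 1$, so $\bar\lambda_n({}^nG_r) \leq Z({}^nG_r)$.

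The key technical step is to characterize the $4^i$-torsion of ${}^nG_r$ by iterating Theorem \ref{thm-freeLambda}(c). Given $x \in F_r$, let $k$ be maximal with $x \in \bar\lambda_k(F_r)$; then $\varphi_k(x) \neq 1$, so $x^4 \in \bar\lambda_{k+1}(F_r) \setminus \bar\lambda_{k+2}(F_r)$. Applying $\varphi_{k+1}$ to $x^4$ and continuing, I get $x^{4^j} \in \bar\lambda_{k+j}(F_r) \setminus \bar\lambda_{k+j+1}(F_r)$ for every $j \geq 0$. Hence $x^{4^i} \in \bar\lambda_{n+1}(F_r)$ if and only if $k \geq n+1-i$, i.e.\ if and only if $x \in \bar\lambda_{n+1-i}(F_r)$. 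Passing to the quotient, the set of $4^i$-torsion elements of ${}^nG_r$ coincides with the subgroup $\bar\lambda_{n+1-i}({}^nG_r)$, so $\Omega_{2i}({}^nG_r) = \bar\lambda_{n+1-i}({}^nG_r)$. The case $i=1$ gives $\Omega_2({}^nG_r) = \bar\lambda_n({}^nG_r) \leq Z({}^nG_r)$, showing ${}^nG_r$ is $2$-central.

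For the exponent, iterating $\bar\lambda_k(F_r)^4 \leq \bar\lambda_{k+1}(F_r)$ yields $F_r^{4^n} \leq \bar\lambda_{n+1}(F_r)$, so $\exp({}^nG_r) \mid 4^n$. For the reverse bound, pick a free generator $x_1$; the previous paragraph applied at $k=1$ gives $x_1^{4^{n-1}} \in \bar\lambda_n(F_r) \setminus \bar\lambda_{n+1}(F_r)$. To upgrade this element from order $2$ to order $4$ in $\bar\lambda_n/\bar\lambda_{n+1}$, I invoke the isomorphism $\beta$ of Theorem \ref{thm-lambdaSeries}(b), valid for $F_r$ since its lower central quotients are torsion-free: the class of $x_1^{4^{n-1}}$ corresponds under $\beta^{-1}$ to the class of $x_1$ in the component $F_r/F_r^4 \gamma_2(F_r) \cong (\mathbb{Z}/4)^r$, which has order $4$. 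Therefore $x_1$ has order exactly $4^n$ in ${}^nG_r$. The nilpotency class is at most $n$ by the easy induction $\gamma_k(F_r) \leq \bar\lambda_k(F_r)$, and at least $n$ because $\mathcal{C}_n \subseteq \mathcal{B}_n$ stays nontrivial modulo $\bar\lambda_{n+1}$ by Theorem \ref{thm-freeLambda}(b).

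Finally, the order count falls out of the same structural data. Stratifying by the $\bar\lambda$-series, $|{}^nG_r| = \prod_{i=1}^n |\bar\lambda_i(F_r)/\bar\lambda_{i+1}(F_r)|$, and by Theorem \ref{thm-lambdaSeries}(b) each factor is isomorphic to $\prod_{j=1}^i \gamma_j(F_r)/\gamma_j(F_r)^4 \gamma_{j+1}(F_r) \cong (\mathbb{Z}/4)^{c(r,1)+\cdots+c(r,i)} = (\mathbb{Z}/4)^{b_i}$, giving $|{}^nG_r| = 4^{b_1+\cdots+b_n}$ and $|\Omega_2({}^nG_r)| = |\bar\lambda_n({}^nG_r)| = 4^{b_n}$. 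The main obstacle, I expect, is the iterated application of Theorem \ref{thm-freeLambda}(c) together with the $(\mathbb{Z}/4)$-module bookkeeping needed to pin down the exponent; the remaining manipulations are largely formal consequences of the earlier work.
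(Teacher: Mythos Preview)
Your proof is correct and follows essentially the same route as the paper's, which invokes Theorems \ref{thm-lambdaSeries} and \ref{thm-freeLambda} for the order, exponent, and nilpotency class, uses Theorem \ref{thm-freeLambda}(c) together with the centrality of the $\bar\lambda$-series for $2$-centrality, and refers to \cite[Theorem 3.5]{bubboloni} for the $\Omega$-subgroup identification; you have simply spelled out in full the arguments that the paper leaves to those references. One small imprecision: when you write ``let $k$ be maximal with $x \in \bar\lambda_k(F_r)$'' you should either restrict to $k \leq n+1$ or note that you only apply this to representatives of nontrivial elements of ${}^nG_r$, so that such a $k$ is guaranteed to exist without needing $\bigcap_k \bar\lambda_k(F_r)=1$.
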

\begin{proof}
The claim on the shape of the $\bar \lambda$-series is easily deduced by induction on $i$. Then the claims on the order, nilpotency class and exponent follow from the previous two theorems. The fact that ${}^n G_r$ is a $2$-central group follows from Theorem \ref{thm-freeLambda} (c), because the $\bar \lambda$-series of a group is a central series. The final claim on the $\Omega$-subgroups of ${}^n G_r$ is proved identically as in \cite[Theorem 3.5]{bubboloni}.
\end{proof}

Finally, we establish the importance of the group ${}^n G_r$ in the context of $p$-central groups.

\begin{theorem} \label{thm-homImage}
 Let $G$ be a $2$-central group with $r$ generators and of exponent $2^n$. Then $G$ is a homomorphic image of ${}^{\lceil \frac{n}{2} \rceil + 1} G_r$. Moreover, any finite $2$-group with $r$ generators is a homomorphic image of ${}^m G_r$ for some $m$.
\end{theorem}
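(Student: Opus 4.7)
Since $G$ is $r$-generated, any surjection $\pi \colon F_r \twoheadrightarrow G$ satisfies $\pi(\bar\lambda_{m+1}(F_r)) = \bar\lambda_{m+1}(G)$ by functoriality of the $\bar\lambda$-series under surjections, so $\pi$ factors through ${}^m G_r = F_r/\bar\lambda_{m+1}(F_r)$ precisely when $\bar\lambda_{m+1}(G) = 1$. My plan is therefore to prove $\bar\lambda_{\lceil n/2 \rceil + 2}(G) = 1$ using the factorization $\bar\lambda_{m+1}(G) = \prod_{i=1}^{m+1} \gamma_i(G)^{4^{m+1-i}}$ from Proposition~\ref{prop-lambdaDescr} and showing that each factor is trivial.

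The essential input is the structural claim that $\gamma_i(G) \leq \Omega_{n-2(i-1)}(G)$ for all $i \geq 1$, with the convention $\Omega_k = 1$ when $k \leq 0$. Granting this, $\gamma_i(G)^{4^{m+1-i}}$ is killed whenever $2(m+1-i) \geq n - 2(i-1)$, which rearranges to $2m \geq n$ and so holds by our choice $m = \lceil n/2 \rceil + 1$. I would prove the claim by induction on $i$: for the inductive step, taking $z \in \gamma_i(G)$ with $z^{2^e} = 1$ (where $e = n - 2(i-1) \geq 2$) and $y \in G$, the element $z^{2^{e-2}}$ has order at most $4$ and so lies in $\Omega_2(G) \leq Z(G)$, giving $[z^{2^{e-2}}, y] = 1$. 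Lemma~\ref{lemma-hallPet} applied to $H = \langle z, [z,y] \rangle$ then yields
\[
[z, y]^{2^{e-2}} \in \gamma_2(H)^{2^{e-2}} \prod_{j=1}^{e-2} \gamma_{2^j}(H)^{2^{e-2-j}},
\]
with $\gamma_k(H) \leq \gamma_{ki}(G)$ since $H \leq \gamma_i(G)$. Once the right-hand side is shown to vanish, $[z,y]^{2^{e-2}} = 1$, and hence $\gamma_{i+1}(G) \leq \Omega_{e-2}(G)$, closing the step.

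The second assertion is easier: for a finite $r$-generated $2$-group $G$ of nilpotency class $c$ and exponent $2^n$, choose $m = \lceil n/2 \rceil + c - 1$. Then in the factorization $\bar\lambda_{m+1}(G) = \prod_i \gamma_i(G)^{4^{m+1-i}}$, factors with $i > c$ vanish because $\gamma_i(G) = 1$, and factors with $i \leq c$ vanish because $4^{m+1-i} \geq 4^{m+1-c} \geq 2^n \geq \exp \gamma_i(G)$. Hence $\bar\lambda_{m+1}(G) = 1$ and $G$ is a quotient of ${}^m G_r$.

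The main obstacle I anticipate is closing the induction in the structural claim: the Hall--Petrescu error term on the right-hand side involves $\gamma_{ki}(G)$ with $k \geq 2$, so a naive forward induction on $i$ does not control these higher-index terms. I expect to resolve this by a combined argument---either a reverse induction bottoming out at indices beyond the nilpotency class (which is itself bounded, as a consequence of the claim, by $\lceil n/2 \rceil + 1$), or by first proving the auxiliary statement $[\Omega_k(G), G] \leq \Omega_{k-2}(G)$ for $k \geq 2$ in a $2$-central group and iterating it from $k = 2$.
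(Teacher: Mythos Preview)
Your plan is sound and would eventually succeed, but it is considerably more roundabout than the paper's argument, and the ``main obstacle'' you anticipate is in fact the entire content of the proof.

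The paper does not prove your structural claim $\gamma_i(G)\leq\Omega_{n-2(i-1)}(G)$ via Hall--Petrescu at all. Instead it simply \emph{cites} the fact that in a $2$-central group the series $\Omega_0\leq\Omega_2\leq\Omega_4\leq\cdots$ is central (this is precisely your ``auxiliary statement'' $[\Omega_k(G),G]\leq\Omega_{k-2}(G)$, a known result from Bubboloni--Corsi Tani). Granting that, together with the trivial inclusion $\Omega_k(G)^4\leq\Omega_{k-2}(G)$, the paper works directly with the \emph{recursive} definition $\bar\lambda_{i+1}=\bar\lambda_i^4[\bar\lambda_i,G]$: an immediate induction gives $\bar\lambda_i(G)\leq\Omega_{2(\lceil n/2\rceil-i+1)}(G)$, hence $\bar\lambda_{\lceil n/2\rceil+1}(G)=1$. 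There is no need to pass through the $\gamma_i$-factorisation of Proposition~\ref{prop-lambdaDescr}, and consequently no Hall--Petrescu error terms ever appear. Your detour through the lower central series is correct but redundant: once $[\Omega_k,G]\leq\Omega_{k-2}$ is available, both the $\gamma_i$-bound and the $\bar\lambda_i$-bound follow by one-line inductions, and the latter is what you actually need.

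Your treatment of the second assertion is correct and essentially matches the paper's (which also appeals to Proposition~\ref{prop-lambdaDescr}); your bound $m=\lceil n/2\rceil+c-1$ is in fact a bit sharper than the paper's $m=n+c-2$.
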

\begin{proof}
 In any $2$-central group $G$ of exponent $4^n$, the $\Omega$-series $\cdots \leq \Omega_{2n-2} \leq \Omega_{2n} = G$ is central (see \cite{bubboloni0}), thus $\bar\lambda_i(G) \leq \Omega_{n-2i+2}(G)$ for all $i \geq 1$. In particular, $\bar\lambda_{\lceil \frac{n}{2} \rceil + 1}(G) = 1$. The presentation homomorphism $F_r \to G$ thus induces the required epimorphism. For the second part we only need to notice that for a finite $2$-group $G$ of exponent $2^n$ an of nilpotency class $c$, we have $\bar \lambda_{n+c-1}(G) = 1$ by Proposition \ref{prop-lambdaDescr}.
\end{proof}
\section{Schur multipliers}
\label{Mult}

\noindent In this section, we present two results concerning the Schur multipliers of $p$-central groups. The first one is a complete description of the Schur multiplier of the free $p$-central groups ${}^n G_r$ for any prime $p$.
 
\begin{theorem} \label{prop-MultiplierFree}
 The Schur Multiplier $M({}^n G_r)$ is elementary abelian (resp. free $\mathbb Z_4$-module for $p=2$) of rank $\sum_{i=2}^{n+1} c(r,i)$, where  $c(w,g)$ is the number of basic commutators of weight $w$ on $g$ generators (calculable by Witt's formula).
\end{theorem}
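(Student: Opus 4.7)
The plan is to use Hopf's formula $M(G) \cong (R \cap [F,F])/[R,F]$ for $G = F_r/R$ with $R = \bar\lambda_{n+1}(F_r)$ in the case $p=2$, and invoke the analogous argument with $\lambda_{n+1}(F_r)$ (carried out in \cite{bubboloni}) for odd primes. Thus the task reduces to computing
\[
\frac{\bar\lambda_{n+1}(F_r) \cap \gamma_2(F_r)}{[\bar\lambda_{n+1}(F_r), F_r]}.
\]

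First, I would identify the numerator. By Proposition \ref{prop-lambdaDescr} we have $\bar\lambda_{n+1}(F_r) = N_{n+1,1}(F_r)$, and since $F_r$ is free, all its nilpotent quotients are torsion-free, so Proposition \ref{prop-torsionFree1}(c) applies to give
\[
\bar\lambda_{n+1}(F_r) \cap \gamma_2(F_r) = N_{n+1,2}(F_r).
\]
Next, by Lemma \ref{lemma-Nseries}(b),
\[
[\bar\lambda_{n+1}(F_r), F_r] = [N_{n+1,1}(F_r), F_r] = N_{n+2,2}(F_r).
\]
So Hopf's formula yields $M({}^n G_r) \cong N_{n+1,2}(F_r)/N_{n+2,2}(F_r)$.

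Now I would invoke Theorem \ref{thm-lambdaSeries}(b): the factor $N_{n+1,2}(F_r)/N_{n+2,2}(F_r)$ is isomorphic (via $\beta$) to the direct product
\[
\prod_{i=2}^{n+1} \frac{\gamma_i(F_r)}{\gamma_i(F_r)^4 \gamma_{i+1}(F_r)}.
\]
Since $\gamma_i(F_r)/\gamma_{i+1}(F_r)$ is free abelian of rank $c(r,i)$ with the images of the basic commutators of weight $i$ as a basis (the classical Magnus--Witt theorem), each factor is a free $\mathbb Z_4$-module of rank $c(r,i)$. Summing the ranks gives the desired conclusion. For odd $p$, the identical argument with $\lambda_i$ in place of $\bar\lambda_i$ and fourth powers replaced by $p$-th powers produces an elementary abelian group of the same total rank.

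The main obstacle I anticipate is the careful bookkeeping at the two identification steps, namely confirming that $[\bar\lambda_{n+1}(F_r),F_r]$ exhausts $N_{n+2,2}(F_r)$ (the non-trivial inclusion needing the proof technique of \cite[Theorem 2.4]{blackburnEvens}, already cited inside Lemma \ref{lemma-Nseries}) and verifying that $\beta$ restricted to the relevant indices is an isomorphism onto the Hopf quotient rather than merely a surjection; the torsion-freeness of the nilpotent quotients of $F_r$ is what makes this work, feeding precisely into Theorem \ref{thm-lambdaSeries}(b). Once these two points are in place, the free $\mathbb Z_4$-module (resp.\ elementary abelian) structure is immediate from the basic commutator basis.
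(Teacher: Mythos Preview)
Your proposal is correct and follows essentially the same route as the paper: Hopf's formula, identification of numerator and denominator as $N_{n+1,2}(F_r)$ and $N_{n+2,2}(F_r)$ via Proposition~\ref{prop-torsionFree1}(c) and Lemma~\ref{lemma-Nseries}(b), and then the isomorphism $\beta$ of Theorem~\ref{thm-lambdaSeries}(b) with $k=2$ to pass to the direct product of the $\gamma_i/\gamma_i^4\gamma_{i+1}$. Your write-up is in fact slightly more explicit than the paper's about which earlier results feed into each step, and the two anticipated obstacles you flag are exactly the points already absorbed into Lemma~\ref{lemma-Nseries}(b) and Theorem~\ref{thm-lambdaSeries}(b).
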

\begin{proof}
For odd $p$, Hopf's formula and \cite[VIII, 1.9]{huppertBlackburn} give us an isomorphism
\[
M({}^n G_r) \cong \frac{\gamma_2(F_r) \cap \lambda_{n+1}(F_r)}{[F_r, \lambda_{n+1}(F_r)]} = \frac{\gamma_2(F_r)^{p^{n-1}} \cdots \gamma_{n+1}(F_r)}{\gamma_2(F_r)^{p^{n}} \cdots \gamma_{n+2}(F_r)}.
\]
There is now a well-defined mapping
\[
\beta \colon \frac{\gamma_2(F_r)}{\gamma_2(F_r)^p \gamma_3(F_r)} \times \cdots \times \frac{\gamma_{n+1}(F_r)}{\gamma_{n+1}(F_r)^p \gamma_{n+2}(F_r)} \to  \frac{\gamma_2(F_r)^{p^{n-1}} \cdots \gamma_{n+1}(F_r)}{\gamma_2(F_r)^{p^{n}} \cdots \gamma_{n+2}(F_r)},
\]
given by $\beta(\bar{a}_2, \bar{a}_3, \dots, \bar{a}_{n+1}) = a_2^{p^{n-1}} a_3^{p^{n-2}} \cdots a_{n+1} \gamma_2(F_r)^{p^{n}} \cdots \gamma_{n+2}(F_r)$, where $a_i \in \gamma_i(F_r)$ and $\bar{a}_i = a_i \gamma_i(F_r)^p \gamma_{i+1}(F_r)$. Moreover, $\beta$ is a bijective homomorphism (cf. \cite[VIII, 1.8 and 1.9]{huppertBlackburn}).

For $p=2$, the proof is essentially the same, referring now to Theorem \ref{thm-lambdaSeries} and Proposition \ref{lemma-Nseries} (b) in providing the isomorphism between the product
\[
\frac{\gamma_2(F_r)}{\gamma_2(F_r)^4 \gamma_3(F_r)} \times \frac{\gamma_3(F_r)}{\gamma_3(F_r)^4 \gamma_4(F_r)} \times \cdots \times \frac{\gamma_{n+1}(F_r)}{\gamma_{n+1}(F_r)^4 \gamma_{n+2}(F_r)}
\]
and the multiplier $M({}^n G_r)$.
\end{proof}

Combining the above with Theorem \ref{thm-homImage} and \cite[Theorem 4.1]{bubboloni}, we obtain the following corollary, dualising a result of \cite{lubotzkyMannPowerful} on poweful $p$-groups.

\begin{corollary} \label{cor-ExpMdivG}
 Every finite $p$-group is a quotient of a $p$-group whose Schur multiplier is of exponent $p$ for odd $p$, $4$ for $p=2$.
\end{corollary}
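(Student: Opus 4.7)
The plan is to construct the covering $p$-group directly as a free $p$-central group of the type $^mG_r$ studied earlier. Given a finite $p$-group $G$, let $r$ be the minimum number of its generators.

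First I would use Theorem \ref{thm-homImage} in the case $p=2$, and \cite[Theorem 4.1]{bubboloni} in the case of odd $p$, to find a positive integer $m$ such that $G$ is a homomorphic image of $^mG_r$. Note that both of these results assert not only that $r$-generated $p$-groups of bounded exponent are quotients of a single $^mG_r$, but that \emph{every} finite $r$-generated $p$-group is a quotient of $^mG_r$ for some $m$ (depending on the exponent and nilpotency class of $G$).

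Next I would invoke Theorem \ref{prop-MultiplierFree}, which describes $M(^mG_r)$ as an elementary abelian $p$-group for odd $p$ and as a free $\mathbb{Z}_4$-module for $p=2$. In either case $\exp M(^mG_r)$ is exactly $p$ (resp. $4$), so $H = {}^mG_r$ is the required $p$-group: it is $p$-central (indeed, free $p$-central), it maps onto $G$, and its Schur multiplier has the prescribed exponent.

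There is essentially no obstacle here; the content of the corollary is the combination of the two main structural results (existence of a free $p$-central cover, and the exact computation of its multiplier). The only thing worth being careful about is that for odd $p$ one must cite the corresponding odd-prime statements of \cite{bubboloni} rather than the even-prime versions proved in this paper, which is precisely what the formulation of Theorem \ref{prop-MultiplierFree} already accommodates.
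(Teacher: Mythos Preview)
Your proposal is correct and follows exactly the same approach as the paper: the corollary is stated immediately after Theorem~\ref{prop-MultiplierFree} with the remark that it follows by combining that theorem with Theorem~\ref{thm-homImage} and \cite[Theorem 4.1]{bubboloni}. Your write-up simply spells out this combination in a bit more detail.
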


Our second result is a positive solution to a special case of the conjecture stating that the exponent of the multiplier $M(G)$ divides the exponent of $G$ for any finite $p$-group $G$. We prove this by referring to Schur's theory of covering groups (cf. \cite[V]{huppert}).

\begin{proposition} \label{prop-expMDivG}
 Let $G$ be a finite $p$-central group. Then $\exp M(G) \leq \exp G$.
\end{proposition}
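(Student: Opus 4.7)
The plan is to work inside a Schur covering group of $G$. Let $H$ be a covering (Darstellungs-)group: a finite $p$-group fitting in a central extension
\[
1 \to A \to H \to G \to 1
\]
with $A \cong M(G)$ and $A \leq Z(H) \cap [H, H]$. Writing $p^n = \exp G$, we automatically have $H^{p^n} \leq A$, and the statement to prove becomes $A^{p^n} = 1$, equivalently $\exp H = p^n$.

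The key step is to leverage the $p$-centrality of $G$ together with Hall--Petrescu. For odd $p$, the condition $\Omega_1(G) \leq Z(G)$ tells us that $\bar h^{p^{n-1}}$ is central in $G$ for every $h \in H$, so $[h^{p^{n-1}}, H] \leq A$. Applying the second half of Lemma \ref{lemma-hallPet} with $k = n$ gives, for all $h, k \in H$,
\[
    [h^{p^n}, k] \equiv [h, k]^{p^n} \pmod{\gamma_2(H)^{p^n} \prod_{i=1}^{n} \gamma_{p^i}(H)^{p^{n-i}}}.
\]
Since $h^{p^n} \in A \leq Z(H)$, the left-hand side vanishes, so $[h, k]^{p^n}$ sits in the Hall--Petrescu error subgroup. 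Because $A$ is abelian and, being contained in $[H, H]$, is generated by such commutators, cutting this error subgroup down until its intersection with $A$ is trivial will yield $A^{p^n} = 1$.

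For $p = 2$, the argument runs in parallel using $\Omega_2(G) \leq Z(G)$ in place of $\Omega_1(G) \leq Z(G)$, together with the variant of Lemma \ref{lemma-hallPet} that underlies the $\bar \lambda$-computations of Section \ref{lambdaSeries}. The main obstacle is controlling the error term $\gamma_2(H)^{p^n} \prod_i \gamma_{p^i}(H)^{p^{n-i}}$: these subgroups are not small a priori. The plan for taming them is a descending induction along the lower central series of $H$, at each step using the centrality of $A$, the $p$-central condition on $G = H/A$, and the nilpotency of $H$ to trade an error factor in $\gamma_k(H)$ for one in $\gamma_{k+1}(H)$, terminating when $\gamma_{c+1}(H) = 1$ for the nilpotency class $c$ of $H$.
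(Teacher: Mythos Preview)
Your setup matches the paper's: take a covering group $H$ with $A \cong M(G) \leq Z(H)\cap H'$, set $p^e=\exp G$, and aim for $(H')^{p^e}=1$. The gap is that the hard step---controlling the Hall--Petrescu error $\gamma_2(H)^{p^e}\prod_i \gamma_{p^i}(H)^{p^{e-i}}$---is only announced, not carried out. Your proposed ``descending induction along the lower central series'' has no stated inductive hypothesis and no mechanism showing how the $p$-central condition on $G=H/A$ enters at each stage. Worse, the leading error factor $\gamma_2(H)^{p^e}$ \emph{is} $(H')^{p^e}$, precisely the quantity you are trying to kill, so as written the argument is circular: some reorganization of the error beyond the raw Hall--Petrescu remainder is required before any induction can start. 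There is also a small slip: $A\leq[H,H]$ means every element of $A$ is a \emph{product} of commutators, not a single commutator, so even if you knew $[h,k]^{p^e}$ lies in a small subgroup for all $h,k$, this does not directly place $A^{p^e}$ there.

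The paper bypasses these difficulties by quoting a commutator-calculus bound of Fern\'andez-Alcober, Gonz\'alez-S\'anchez and Jaikin-Zapirain:
\[
(H')^{p^e}\ \leq\ \prod_{i=1}^{e}\bigl[H^{p^{e-i}},\,{}_{i(p-1)+1}H\bigr].
\]
This repackages the error as iterated commutators with a power subgroup in the first slot, and now the $p$-central hypothesis bites cleanly: in $G=H/A$ the $\Omega$-series is central, so $(H/A)^{p^{e-i}}\leq \Omega_i(H/A)\leq Z_i(H/A)$, giving $[H^{p^{e-i}},{}_iH]\leq A$ and hence $[H^{p^{e-i}},{}_{i+1}H]=1$. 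Since $i(p-1)+1\geq i+1$ for every $i$, each factor in the displayed product vanishes. If you wish to run your induction from scratch, this is the shape it must take: the inductive input is $[H^{p^{e-i}},{}_{i+1}H]=1$ coming from the central $\Omega$-series of $G$, not a bare trade of $\gamma_k$ for $\gamma_{k+1}$ inside a Hall--Petrescu remainder.
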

\begin{proof}
 Let $H$ be a covering group of $G$, so that $G \cong H/Z$ with $M(G) \cong Z \leq Z(H) \cap H'$, and let us denote $\exp G = p^e$. It suffices to prove that $H' \leq \Omega_e(H)$. Since $Z$ is central in $H$, we have $[\mho_{e}(H), H] = 1$. Using \cite[Theorem 2.5 (ii)]{fernandez}, we get
\[
 (H')^{p^e} \leq \prod _{i=1}^e [H^{p^{e-i}},{}_{i(p-1)+1}H].
\]
Since the group $G = H/Z$ is $p$-central, we have $(H/Z)^{p^{e-i}} \leq \Omega_i(H/Z) \leq Z_i(H/Z)$, from which we conclude $[H^{p^{e-i}}, {}_{i}H] \leq Z$. Since $i(p-1) \geq i$ for all $1 \leq i \leq e$ and $Z$ is central in $H$, the right hand side of the above product is trivial. This completes our proof.
\end{proof}
\section{Exponential rank}
\label{Exprank}

\noindent Following \cite{pmSchurMultPowerEnd}, we determine the exponential rank of $p$-central groups. Recall that the exponent semigroup of a finite $p$-group $G$ is 
\[
\mathcal{E}(G) = \{ n \in \mathbb{Z} \mid (xy)^n = x^n y^n \text{ for all } x,y \in G \}.
\]
Let $\exp (G/Z(G)) = p^e$. By \cite[Proposition 3.2]{pmSchurMultPowerEnd}, there exists a unique nonnegative integer $r$ such that $\mathcal{E}(G) = p^{e+r}\mathbb{Z} \cup (p^{e+r}\mathbb{Z} + 1)$. The exponential rank of the group $G$ is defined to be $\exprank (G) = r$. When $G$ is regular, its exponential rank is equal to $0$, and when $G$ is powerful, we similarly have $\exprank (G) = 0$ for odd $p$, $\exprank (G) = 1$ for $p=2$ if $G$ is not abelian. 

In the course of turning our attention to the exponential rank of $p$-central groups, let us first recall an old result of Laffey \cite[Theorem 3]{laffey2}.

\begin{theorem} \label{laff3}
 Let $G$ be a $p$-central group. Then $\exp G' = \exp G/Z(G)$.
\end{theorem}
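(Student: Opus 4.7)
The plan is to establish the two inequalities $\exp G' \leq \exp G/Z(G)$ and $\exp G/Z(G) \leq \exp G'$ separately, both very much in the spirit of the proof of Proposition \ref{prop-expMDivG} above.

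For $\exp G' \leq \exp G/Z(G)$, set $p^e = \exp G/Z(G)$, so that $\mho_e(G) = G^{p^e} \leq Z(G)$ and in particular $[\mho_e(G), G] = 1$. Applying the Fern\'andez-Alcober bound used in Proposition \ref{prop-expMDivG} directly to $G$ in place of $H$ gives
\[
(G')^{p^e} \leq \prod_{i=1}^{e} [G^{p^{e-i}}, {}_{i(p-1)+1} G].
\]
Since $G$ is $p$-central, the Laffey--Mann description identifies $\Omega_i(G)$ with the set of elements of order dividing $p^i$, so $G^{p^{e-i}} \leq \Omega_i(G)$. Combined with the standard inclusion $\Omega_i(G) \leq Z_i(G)$, valid in any $p$-central group and proved by a short induction on $i$ from the defining inclusion $\Omega_1(G) \leq Z(G)$, each factor lies in $[Z_i(G), {}_{i(p-1)+1} G] = 1$ because $i(p-1)+1 > i$ for every $p \geq 2$. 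Hence $(G')^{p^e} = 1$, which yields the desired inequality.

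For the reverse inequality $\exp G/Z(G) \leq \exp G'$, set $p^m = \exp G'$ and aim to show $[x^{p^m}, y] = 1$ for all $x, y \in G$. With $H = \langle x, [x, y]\rangle$, the second part of Lemma \ref{lemma-hallPet} applied with $k = m$ yields
\[
[x^{p^m}, y] \equiv [x, y]^{p^m} \mod \gamma_2(H)^{p^m} \prod_{i=1}^{m} \gamma_{p^i}(H)^{p^{m-i}}.
\]
The leading term $[x, y]^{p^m}$ vanishes by the definition of $m$, and the inclusion $\gamma_2(H) \leq G'$ disposes of the factor $\gamma_2(H)^{p^m}$. The remaining correction factors $\gamma_{p^i}(H)^{p^{m-i}}$ sit inside $\gamma_{p^i + 1}(G)^{p^{m-i}}$ (the weight is shifted by one because $H$ is generated by $x$ and the weight-two element $[x, y]$). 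To kill them I would proceed by induction on the nilpotency class of $G$: the base case of class at most two is immediate since then $[x^{p^m}, y] = [x, y]^{p^m} = 1$, and in the inductive step one passes to a $p$-central quotient of strictly smaller class, feeding the output back into the above Hall--Petrescu expansion.

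The main obstacle is this second direction, and specifically the inductive step: one must identify a central subgroup by which to quotient $G$ so that the quotient is again $p$-central and of strictly smaller nilpotency class. Natural candidates are $\gamma_c(G)$ (with $c$ the class of $G$) or $\Omega_1(G)$, and in each case the verification that elements of order $p$ in the quotient lift to elements centralising $G$ modulo the chosen subgroup will itself require a Hall--Petrescu-type calculation exploiting the $p$-central hypothesis. Once this closure property of $p$-centrality under the chosen quotient is in place, the Hall--Petrescu correction terms are controlled inductively and the argument closes.
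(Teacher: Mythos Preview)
The paper does not actually prove this theorem; it is quoted as \cite[Theorem~3]{laffey2} and used as input for Theorem~\ref{thm-exprank}. So there is no proof here to compare against, and I evaluate your argument on its own.

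The first inequality contains a real error, not merely a gap. From $p^e = \exp G/Z(G)$ you only know $g^{p^e} \in Z(G)$ for every $g$, not $g^{p^e} = 1$; hence the claimed inclusion $G^{p^{e-i}} \leq \Omega_i(G)$ is false. Indeed, for any nontrivial $p$-central $G$ with $\exp G = p^n$ one has $G^{p^{n-1}} \leq \Omega_1(G) \leq Z(G)$, so $e \leq n-1 < n$; taking $i = e$ in your inclusion would give $G \leq \Omega_e(G)$, i.e.\ $\exp G \leq p^e$, a contradiction. The analogy with Proposition~\ref{prop-expMDivG} breaks down because there $p^e$ is the exponent of the $p$-central group $H/Z$ itself, so $(H/Z)^{p^{e-i}}$ genuinely lies in $\Omega_i(H/Z)$; in your setup the corresponding statement lives in $G/Z(G)$, and pushing the conclusion back to $G$ would require $G/Z(G)$ to be $p$-central --- exactly the unresolved closure question you flag for the other direction. (Note also that for $p=2$ the induction from $\Omega_1(G)\leq Z(G)$ is unavailable; the defining inclusion is $\Omega_2(G)\leq Z(G)$, and one gets only $\Omega_{2i}(G)\leq Z_i(G)$.)

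For the second inequality you correctly isolate the obstacle and leave it open. The Hall--Petrescu reduction is fine, but the inductive step --- that some natural central quotient of a $p$-central group remains $p$-central --- is the substance of Laffey's original argument and cannot be treated as routine. As it stands, neither direction is complete.
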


Next we prove a lemma based on Hall's collection formula.

\begin{lemma} \label{lemma-productEss}
 Let $G$ be a $p$-central group. For any $x,y \in G$ such that $[x,y] \in \Omega_{i-\epsilon}(G)$ for some $i \geq 0$ and $\epsilon = 0$ for odd $p$, $\epsilon = 1$ for $p=2$, we have $x^{p^{i}} y^{p^{i}} = (xy)^{p^i}$.
\end{lemma}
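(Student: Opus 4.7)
The plan is to apply Hall's collection formula to expand
\[
(xy)^{p^i} \;=\; x^{p^i} y^{p^i} \prod_{j=2}^{p^i} c_j^{\binom{p^i}{j}}, \qquad c_j \in \gamma_j(\langle x, y\rangle),
\]
and then show that every correction term $c_j^{\binom{p^i}{j}}$ vanishes. This reduces the lemma to two independent estimates: (i) bounding the order of $c_j$, and (ii) computing the $p$-adic valuation of the binomial coefficient $\binom{p^i}{j}$.

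For (i), I would first observe that since $\Omega_{i-\epsilon}(G)$ is characteristic in $G$ and contains $[x,y]$, it contains the $\langle x, y\rangle$-normal closure of $[x,y]$, which equals $\gamma_2(\langle x, y\rangle)$ because $\langle x, y\rangle$ is $2$-generated. I would then iterate using the inclusion $[\Omega_m(G), G] \leq \Omega_{m-1}(G)$, which holds for odd $p$ because the $\Omega$-series of a $p$-central group is a central series, and extends to the $p=2$ case by the sandwich $\Omega_m \leq \Omega_{m+1}$ when $m$ is odd, combined with the centrality of the even $\Omega$-series from \cite{bubboloni0}. Induction then yields $\gamma_j(\langle x, y\rangle) \leq \Omega_{i-\epsilon-j+2}(G)$ for all $j \geq 2$, so that $c_j$ has order dividing $p^{\max(0,\,i-\epsilon-j+2)}$.

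For (ii), Kummer's theorem gives $v_p\!\bigl(\binom{p^i}{j}\bigr) = i - v_p(j)$ for $1 \leq j \leq p^i$. Combined with (i), the correction term $c_j^{\binom{p^i}{j}}$ is trivial as soon as $i - v_p(j) \geq i - \epsilon - j + 2$, i.e.\ $v_p(j) \leq j - 2 + \epsilon$. This is the inequality $v_p(j) \leq j - 2$ for odd $p$ and $v_p(j) \leq j - 1$ for $p=2$; both are elementary consequences of $p^{v_p(j)} \leq j$ together with $j \geq 2$ (for odd $p$ one invokes $p^{v_p(j)} \geq 3^{v_p(j)} \geq v_p(j) + 2$ whenever $v_p(j) \geq 1$).

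The main obstacle I anticipate is the telescoping step $\gamma_j(\langle x,y\rangle) \leq \Omega_{i-\epsilon-j+2}(G)$ — in particular, making the centrality $[\Omega_m(G), G] \leq \Omega_{m-1}(G)$ work at odd indices in the $2$-central case, where only the even $\Omega$-series is known to be central. Once this subgroup inclusion is in hand, the rest is routine bookkeeping with the collection formula and the valuation $v_p\!\bigl(\binom{p^i}{j}\bigr) = i - v_p(j)$.
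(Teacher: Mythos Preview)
Your proposal is correct and follows essentially the same route as the paper: Hall's collection formula, the telescoping bound $\gamma_j(\langle x,y\rangle)\le\Omega_{\text{something}}(G)$ coming from centrality of the $\Omega$-series, and the valuation identity $v_p\binom{p^i}{j}=i-v_p(j)$ to kill each correction term. The only difference is in the $p=2$ bookkeeping: the paper drops the $\Omega$-index by $2$ at each step (using the even $\Omega$-series directly) to get $\gamma_j\le\Omega_{i+3-2j}$ and then checks $v_2(j)\le 2j-3$, whereas you drop by $1$ via the sandwich $\Omega_m\le\Omega_{m+1}$ to get $\gamma_j\le\Omega_{i+1-j}$ and check $v_2(j)\le j-1$; your version is slightly weaker on the subgroup side but perfectly adequate, and your sandwich argument in fact handles the odd-index issue you flag as an obstacle.
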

\begin{proof}
 By Hall's collection formula, we have 
\[ 
 x^{p^{i}} y^{p^{i}} = (xy)^{p^i} c_2^{\binom{p^i}{2}}  c_3^{\binom{p^i}{3}} \cdots  c_{p^i}
\]
for some $c_j \in \gamma_j(\langle x,y \rangle)$. It thus suffices to prove that $\exp \gamma_j(\langle x,y \rangle)$ divides the binomial $\binom{p^i}{j}$ for all $2 \leq j \leq p^i$. Let first $p$ be odd. Since the $\Omega$-series of $G$ is central, we have $\gamma_j(\langle x,y \rangle) \leq \Omega_{i+2-j}(G)$ for all $j \geq 2$. Our claim is thus reduced to the fact that $p^{ i + 2 - j }$ divides $\binom{p^i}{j}$. Now let $[k]_p$ denote the largest integer such that $p^{[k]_p}$ divides $k$.  A straightforward induction shows $[\binom{p^i}{j}]_p = i - [j]_p$. The inequality in question is therefore equivalent to $[j]_p \leq j - 2$ for all $j \geq 2$, which evidently holds since $[j]_p \leq \lfloor \log_p j \rfloor$. When $p=2$, the series $1 = \Omega_0(G) \leq \Omega_2(G) \leq \cdots \leq G$ is central and thus  $\gamma_j(\langle x,y \rangle) \leq \Omega_{i+3-2j}(G)$ for all $j \geq 2$. It now analogously suffices to prove the inequality $i+3-2j \leq i - [j]_2$, which is again trivial.
\end{proof}

Finally, we state and prove our theorem.

\begin{theorem} \label{thm-exprank}
 Let $G$ be a $p$-central group.
\begin{enumerate}[\indent \em (a)]
 \item If $p$ is odd, then $\exprank(G) = 0$.
 \item If $p=2$ and $G$ is not abelian, then $\exprank(G) = 1$.
\end{enumerate}
\end{theorem}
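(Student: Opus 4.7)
The upper bounds in both parts come from combining Theorem \ref{laff3} with Lemma \ref{lemma-productEss}. Writing $\exp(G/Z(G)) = p^e$, Theorem \ref{laff3} gives $\exp G' = p^e$, so every commutator $[x,y]$ lies in $\Omega_e(G)$. For odd $p$, Lemma \ref{lemma-productEss} applied with $i = e$ and $\epsilon = 0$ immediately yields $(xy)^{p^e} = x^{p^e} y^{p^e}$ for all $x, y$, hence $p^e \in \mathcal{E}(G)$ and $\exprank(G) = 0$. For $p = 2$, the same lemma applied with $i = e+1$ and $\epsilon = 1$ gives $2^{e+1} \in \mathcal{E}(G)$, whence $\exprank(G) \leq 1$.

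The substantive content of (b) is the lower bound $\exprank(G) \geq 1$, i.e.\ producing $u, v \in G$ with $(uv)^{2^e} \neq u^{2^e} v^{2^e}$. My plan is to pass to the class-$2$ quotient $H = G/\gamma_3(G)$, which is still non-abelian (since $G$ is nilpotent and non-abelian forces $\gamma_2(G) \not\leq \gamma_3(G)$) and still $2$-central. Within $H$, Hall's collection formula collapses to $(xy)^{2^e} = x^{2^e} y^{2^e} [y,x]^{\binom{2^e}{2}}$, and since $\binom{2^e}{2} = 2^{e-1}(2^e - 1)$ with $2^e - 1$ odd, the correction term is non-trivial precisely when $[y, x]$ has order $2^e$ in $H$. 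Thus the task reduces to exhibiting $u, v \in G$ whose commutator $[u, v]$ has order exactly $2^e$ modulo $\gamma_3(G)$.

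To construct such a witness I would take $u \in G$ with $u Z(G)$ of order $2^e$ in $G/Z(G)$ (possible since $\exp(G/Z(G)) = 2^e$), so that $u^{2^{e-1}} \notin Z(G)$, and then pick $v \in G$ with $[u^{2^{e-1}}, v] \neq 1$. Since $H$ has class $2$, one has $[u^{2^{e-1}}, v] \equiv [u, v]^{2^{e-1}} \pmod{\gamma_3(G)}$, so the desired witness is produced provided $v$ can be chosen with $[u^{2^{e-1}}, v] \notin \gamma_3(G)$. The main obstacle I expect is verifying this genericity: if no such $v$ existed for a given $u$, then $u^{2^{e-1}}$ would centralise $G$ modulo $\gamma_3(G)$, and one would have to iterate deeper into the lower central series --- using Lemma \ref{lemma-hallPet} together with the centrality of the $\Omega$-series of a $2$-central group in steps of $2$ --- to contradict the initial choice of $u$. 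This inductive step should mirror the treatment of powerful $2$-groups carried out in \cite{pmSchurMultPowerEnd}.
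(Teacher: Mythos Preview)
Your treatment of the upper bounds is exactly the paper's: Theorem~\ref{laff3} gives $\gamma_2(G)\le\Omega_e(G)$, and Lemma~\ref{lemma-productEss} then yields $p^e\in\mathcal E(G)$ for odd $p$ and $2^{e+1}\in\mathcal E(G)$ for $p=2$.

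For the lower bound in (b) there is a genuine gap. Passing to $H=G/\gamma_3(G)$ is attractive because the class-$2$ collection formula is clean, but the price is that you now need $[u,v]^{2^{e-1}}\notin\gamma_3(G)$ for some $u,v$, i.e.\ $\exp(\gamma_2(G)/\gamma_3(G))=2^e$. This does \emph{not} follow from $\exp\gamma_2(G)=2^e$, and your sketch of an ``iterate deeper into the lower central series'' argument is not a proof; it is not clear how the step-$2$ centrality of the $\Omega$-series would drive such an induction.

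The paper sidesteps this by \emph{not} passing to the quotient. Working in $G$, one first records that $\gamma_2(G)\le\Omega_e(G)$ forces $\gamma_i(G)\le\Omega_{e+2-i}(G)$ (via centrality of the $\Omega$-series), and hence $\gamma_{2^i}(G)\le\Omega_{e-i}(G)$ for $i\ge2$. Feeding this into Lemma~\ref{lemma-hallPet} reduces the Hall--Petrescu correction to $\gamma_2(\langle x,y\rangle)^{2^{e-1}}$, and a short explicit collection then gives, \emph{in $G$},
\[
(xy)^{2^e}=x^{2^e}y^{2^e}\,[y,x]^{\binom{2^e}{2}}\,[y,x,x]^{\binom{2^e}{3}}\,[y,x,y]^{\binom{2^e}{2}+2\binom{2^e}{3}}.
\]
Since $\gamma_3(G)\le\Omega_{e-1}(G)$ and each weight-$3$ exponent has $2$-adic valuation at least $e-1$, the last two factors vanish in $G$. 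Under the contradictory hypothesis $\exprank(G)=0$ this yields $[y,x]^{2^{e-1}}=1$ for all $x,y$, whence $\gamma_2(G)\le\Omega_{e-1}(G)$, contradicting $\exp\gamma_2(G)=2^e$. The point you are missing is that the $\Omega$-bounds already kill the weight-$\ge 3$ terms \emph{in $G$}, so no descent to a quotient (and no search for a specific witness) is needed.
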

\begin{proof}
 Suppose $\exp G/Z(G) = p^e$. By Theorem \ref{laff3}, we have $\gamma_2(G) \leq \Omega_e(G)$. For $p$ odd, Lemma \ref{lemma-productEss} shows that the mapping $x \mapsto x^{p^e}$ is an endomorphism of $G$, which is exactly our claim. Now let $p = 2$. The same reasoning shows that the mapping $x \mapsto x^{2^{e+1}}$ is an endomorphism of $G$, so we have $\exprank(G) \leq 1$. If $G$ is abelian, then clearly $\exprank(G) = 0$. Suppose now that there exists a nonabelian $2$-central group $G$ with $\exprank(G) = 0$. Since $\gamma_2(G) \leq \Omega_e(G)$, we obtain $\gamma_i(G) \leq \Omega_{e+2-i}(G)$ for all $2 \leq i \leq e$, and consequently $\gamma_{2^i}(G) \leq \Omega_{e-i}(G)$. Lemma \ref{lemma-hallPet} thus implies
 \[
 (xy)^{2^e} \equiv x^{2^e} y^{2^e} \mod{\gamma_2(\langle x,y \rangle)^{2^{e-1}}}.
 \]
The corresponding terms in $\gamma_2(\langle x,y \rangle)^{2^{e-1}}$ can be computed using the commutator collection process \cite[pp. 165--166]{hall}. We obtain 
\[
 (xy)^{p^e} = x^{p^e} y^{p^e} [y,x]^{\binom{2^e}{2}} [y,x,x]^{\binom{2^e}{3}} [y,x,y]^{\binom{2^e}{2} + 2{\binom{2^e}{3}}}.
\]
Since $\exp \gamma_2(G) = 2^e$, the fact that the $\Omega$-series is central in $G$ gives $\exp \gamma_3(G) \leq 2^{e-1}$. We are also assuming $\exprank(G) = 0$, so the above equality implies $[y,x] \in \Omega_{e-1}(G)$ for all $x,y \in G$, but this is clearly a contradiction.
\end{proof}

Note that the obtained result coincides with the one for powerful $p$-groups.
\vspace{0.5\baselineskip}

\noindent {\bf Acknowledgement.} ~ The author wishes to express his gratitude to Primož Moravec for adding his valuable suggestions and guidance.



\begin{thebibliography}{99}

\bibitem{blackburnEvens}
  N. Blackburn, L. Evens,
  {\em Schur multipliers of $p$-groups},
  J. Reine Angew. Math. {\bf 309} (1979), 100--113.

\bibitem{bubboloni0}
  D. Bubboloni, G. Corsi Tani,
  {\em $p$-groups with some regularity properties},
  Ric. di Mat.
  \textbf{49} (2000),
  327--339.

\bibitem{bubboloni}
  D. Bubboloni, G. Corsi Tani,
  {\em $p$-groups with all the elements of order $p$ in the center},
  Algebra Colloq.
  \textbf{11} (2004),
  181--190.

\bibitem{dixonSautoyAnalytic}
 J. D. Dixon, M. P. F. du Sautoy, A. Mann and D. Segal,
 {\em Analytic pro-$p$ groups},
 2nd Edition, Cambridge University Press, 1999.

\bibitem{fernandez}
  G. A. Fernández--Alcober, J. González--Sanchez, A. Jaikin--Zapirain,
  \emph{Omega subgroups of pro-$p$ groups},
  Israel J. Math.
  \textbf{166} (2008),
  393--412.

\bibitem{hall}
  M. Hall,
  {\em The theory of groups},
  Macmillan, New York, 1959.
  
\bibitem{huppert}
  B. Huppert,
  \emph{Endliche Gruppen I},
  Springer--Verlag,
  Berlin,
  1967.

\bibitem{huppertBlackburn}
  B. Huppert, N. Blackburn,
  \emph{Finite Groups II},
  Springer--Verlag,
  New York - Berlin - Heidelberg,
  1982.

\bibitem{laffey1}
  T. J. Laffey,
  {\em A lemma on finite $p$-groups and some consequences},
  Proc. Camb. Phil. Soc.
  \textbf{75} (1974),
  133--137.

\bibitem{laffey2}
  T. J. Laffey, 
  {\em Centralizers of Elementary Abelian Subgroups in Finite $p$-groups},
  J. Algebra
  \textbf{51} (1978),
  88--96.

\bibitem{lubotzkyMannPowerful}
  A. Lubotzky, A. Mann,
  {\em Powerful $p$-groups, I. Finite groups},
  J. Algebra
  {\bf 105} (1987),
  484--505.

\bibitem{mannPower}
  A. Mann,
  {\em The power structure of $p$-groups II},
  J. Algebra
  {\bf 318} (2007),
  953--956. 

\bibitem{pmSchurMultPowerEnd}
  P. Moravec,
  {\em Schur multipliers and power endomorphisms of groups},
  J. Algebra
  {\bf 308} (2007),
  12--25.

\end{thebibliography}
\end{document}